\documentclass[reqno,b6pape]{amsart}
\usepackage{amsmath}
\usepackage{amssymb}
\usepackage{amsthm}
\usepackage{enumerate}
\usepackage[mathscr]{eucal}
\usepackage{eqlist}
\usepackage{hyperref}
\usepackage[polish,english]{babel}
\usepackage{polski}
\hypersetup{colorlinks=true,linkcolor=red, anchorcolor=green, citecolor=blue, urlcolor=red, filecolor=magenta, pdftoolbar=true}

\setlength{\textwidth}{139.9mm}
\setlength{\textheight}{188.2mm}

%%%%%%%%%%%%%%%%%%%%%%%%%%%%%%%%%%%%%%%%%%%%%%%%%%%%%%%%%%%%%%%%%%%%%%%%
%Options of setting environments for theorems, definitions, etc.
%and their counters.

\theoremstyle{plain}
\newtheorem{theorem}{Theorem}[section]
%[section] at the end
%subordinates the counter "theorem" to the counter "section"
\newtheorem{prop}[theorem]{Proposition}
%[theorem] after setting the counter "prop":
%the counter "prop" joins counter "theorem"
\newtheorem{lemma}{Lemma}[section]
\newtheorem{corol}{Corollary}[section]
%the counter "corol" is subordinate to the counter "theorem"

\theoremstyle{definition}
\newtheorem{definition}{Definition}[section]
\newtheorem{remark}{\textnormal{\textbf{Remark}}}
\newtheorem{example}{Example}
\newtheorem{note}{Note}
\newtheorem{question}{Question}

%the counter "remark" does not join any counter and
%is not subordinate to any counter

%* the environment "acknowledgement" is not numbered

\theoremstyle{remark}

%\newtheorem{name of the new (been setting) counter}{text which
%will appear in the output}[name of the existing counter to which
%the new counter will be subordinate]

%\newtheorem{name of the new (been setting) counter}[name of the existing counter which
%the new counter joins]{text which will appear in the output}

\numberwithin{equation}{section}
%subordinates the counter "equation" to the counter "section"

%%%%%%%%%%%%%%%%%%%%%%%%%%%%%%%%%%%%%%%%%%%%%%%%%%%%%%%%%%%%%%%%%%%%%

%%%%%%%%%%%%%%%%%%%%%%%%%%%%%%%%%%%%%%%%%%%%%%%%%%%%%%%%%%%%%%%%%%%%%%%
\begin{document}
\setcounter{page}{1}
\title[Fixed point theorem in quasi-$(2,\beta)$-Banach spaces and  its applications ]{A new fixed point approach  to hyperstability of radical-type functional equations in quasi-$(2,\beta)$-Banach spaces}
\author[Iz. EL-Fassi ]{Iz-iddine EL-Fassi$^{\star 1}$   }

\newcommand{\acr}{\newline\indent}

\address{$^{1}$Department of Mathematics,\acr
                   Faculty of Sciences and Techniques,\acr
                   Sidi Mohamed Ben Abdellah University, \acr
                   B.P. 2202, Fez,
                   Morocco.}				
									\email{$^{1}$E-mail: izidd-math@hotmail.fr; izelfassi.math@gmail.com; iziddine.elfassi@usmba.ac.ma}  \renewcommand{\thefootnote}{} \footnotetext{ $^{\star}$Corresponding author.}
\subjclass[2010]{Primary 47H10, 46A16; Secondary  39B82, 65Q20.} 
\keywords{   Fixed Point Theorems, Quasi-$(2,\beta)$-Banach Space; Hyperstability; Functional Equations.}

\begin{abstract} 
The main focus of this paper is to   define the notion of \textit{quasi-$(2,\beta)$-Banach space}  and show some properties in this new space, by help of it and under some  natural assumptions, we  prove that the fixed point theorem \cite[Theorem 2.1]{dang} is still valid in the setting of quasi-$(2,\beta)$-Banach spaces, this is also an extension of the fixed point result of Brzd\k{e}k et al. \cite[Theorem 1]{krs} in $2$-Banach spaces to quasi-$(2,\beta)$-Banach spaces. In the next part, we give a general solution of the  radical-type functional equation \eqref{eq0}.  In addition, we study the hyperstability  results for these functional equation by applying the aforementioned fixed point theorem, and at the end of this paper we will derive some consequences.
\end{abstract}
\maketitle

\section{Introduction}
Let $E$ and $F$ be linear spaces over a real or complex scalar field $\mathbb{K}$. We recall that a function $g : E \to F $ satisfies  the general quadratic equation if
\begin{align}\label{eq00}
g\left(ax+by\right)+g\left(ax-by\right)=cg(x)+dg(y),\;\;x,y\in E,
\end{align}
where $a,b,c,d\in \mathbb{K}\setminus\{0\}.$ are fixed numbers. We see that for $c=d=a^2+b^2$ in \eqref{eq00} we get the Euler-Lagrange functional equation investigated by J.M. Rassias \cite{j1,j2} (see also \cite{pi}), while the quadratic functional equation corresponds to $a=b=1$ and $c=d=2.$

\vspace*{2mm}
In the sequel, $\mathbb{N}$, $\mathbb{R}$ and $\mathbb{K}$ denote the sets of all positive integers, real numbers and the field of real or complex numbers, respectively.  Also, $W^V$ denotes the set of all functions from a set $V\neq\emptyset$ to a set $W\neq\emptyset$. We put  $\mathbb{R}_0:=\mathbb{R}\setminus \{0\}$,  $\mathbb{R}_+:=[0,\infty)$ and $\mathbb{N}_0:=\mathbb{N}\cup \{0\}$.
\vspace*{2mm}

First of all, let us recall the history in the stability theory for functional equations. The story of the stability of functional equations dates back to 1925 when a stability result appeared in the celebrated book by P\'{o}olya and Szeg \cite{sz}. In 1940, Ulam \cite{1} posed the famous Ulam stability problem which was partially solved by Hyers \cite{2} in the framework of Banach spaces. Later, Hyers' result was extended by Aoki \cite{0} and next by  Rassias \cite{3}. Since then numerous papers on this subject have been published and we refer to \cite{agr,ci,cz,izdin,hy,ham,ju,mo} for more details. On the other hand, fixed point theorems have been already applied in the theory of Hyers-Ulam stability by several authors (see for instance \cite{cad,fas1,jung,jung0}) and
it seems that Baker (see \cite{baker}) has used this tool for the first time in this field.

According to our best knowledge, the first hyperstability result was published in \cite{Bo}, and concerned
ring homomorphisms. However, it seems that the term \textit{hyperstability} was used for the ﬁrst time in \cite{ma} (quite often it is confused with \textit{superstability}, which admits also bounded functions). There are many researchers investigating the hyperstability results for functional equations in many areas (see, e.g., \cite{18,180,dang,9,19}). 

The radical functional equation is one of the popular topics for investigating in the theory of stability. Nowadays, a lot of papers concerning the stability and the hyperstability of the radical functional equation
in various spaces was appeared (see in \cite{aiem,ali,fass,izdin,i0,ham} and references therein).

In \cite{ham}, Khodaei et al.  solved the following functional equations
$$f\left(\sqrt{ax^{2}+by^{2}}\right)=af(x)+bf(y),\;\;x,y \in \mathbb{R},
$$
$$f\left(\sqrt{ax^{2}+by^{2}}\right)+f\left(\sqrt{|ax^{2}-by^{2}|}\right)=a^2 f(x)+b^2 f(y),\;\;x,y \in \mathbb{R},$$
where $a,\;b$  are fixed positive reals and proved generalized Ulam stability of these functional equations in $2$-normed spaces.

In 2018, Dung et al. \cite{dang} extended the fixed point theorem of Brzd\k{e}k et al. 
\cite[Theorem 1]{bcp} in metric spaces to $b$-metric spaces, in particular to quasi-Banach spaces, and studied the hyperstability  for the general  linear equation  in the setting of quasi-Banach spaces.
\vspace*{2mm}

A function $f : \mathbb{R} \to  X$ is a solution of the radical-type functional equation if and only if
\begin{align}\label{eq0}
f\left(\sqrt[3]{ax^{3}+by^{3}}\right)+f\left(\sqrt[3]{ax^{3}-by^{3}}\right)=cf(x)+df(y),\;\;x,y \in \mathbb{R},
\end{align} 
where $a, b, c$ and $d$ are nonzero real constants.

We  say  that a  function $f : \mathbb{R} \to  X$ fulfills  the radical-type functional equation \eqref{eq0} on $\mathbb{R}_0$ if and only if 
\begin{align}\label{eq000}
f\left(\sqrt[3]{ax^{3}+by^{3}}\right)+f\left(\sqrt[3]{ax^{3}-by^{3}}\right)=cf(x)+df(y),\;\; x,y \in \mathbb{R}_0,
\end{align}
where $\sqrt[3]{a}x\neq \pm \sqrt[3]{b}y$ and $a,b,c,d\in \mathbb{R}_0.$
\vspace*{2mm}

The contents of the paper are as follows:
\begin{itemize}
	\item[$\circ$] In Sect. \ref{some}, we introduce a new space called  \textit{quasi-$(2,\beta)$-Banach space} and we investigate also some results about this space.
	\item[$\circ$] In Sect. \ref{fpt}, we prove that fixed point theorem \cite[Theorem 2.1]{dang} remains  valid in the setting of quasi-$(2,\beta)$-Banach space and we derive from it many particular cases.
	\item[$\circ$] In Sect. \ref{sol}, we achieve the general solution of the functional equation \eqref{eq0}.
	\item[$\circ$] In the last  Sect. \ref{sol+}, we will apply our fixed point theorem to show   the hyperstability results for the radical-type functional equation  \eqref{eq0}, and  we finish this paper with  some consequences.
\end{itemize}

\vspace*{2mm}

It is well-known that the theory of 2-normed spaces  was initially  introduced  by G\"{a}hler \cite{sg,sg1} in the mid 1960s, and has been developed extensively in different subjects by others, for example \cite{el,fr,pa,w}.
Now, we recall by the definition  of a $(2, \beta)$-normed space and some preliminary results.

Let $0 < \beta\le  1$ be a fixed real number  and let  $X$ be a linear space over $\mathbb{K}$ with $dim\; X\ge 2$. A function  $\|\cdot,\cdot\|_{\beta}:X\times X\to \mathbb{R}_{+}$ is called a \textit{$(2,\beta)$-norm} on $X$ if and only if it satisfies: 
\begin{enumerate}
	\item [(N1)] $\|x,y\|_{\beta}= 0$ if and only if $x$ and $y$ are linearly dependent;
	\item [(N2)] $\|x,y\|_{\beta}=\|y,x\|_{\beta}$ for all $x, y \in X;$
	\item [(N3)] $\|\lambda x,y\|_{\beta}=|\lambda|^\beta \|x,y\|_{\beta}$ for all $x, y \in X$ and $\lambda\in \mathbb {K};$
	\item [(N4)]  $\|x , y+z\|_{\beta}\le \|x,y\|_{\beta}+\|x,z\|_{\beta}$ for all $x, y,z \in X$.
\end{enumerate}
 The pair $(X,\|\cdot,\cdot\|_{\beta})$ is called a $(2,\beta)$-\textit{normed space}. 

\begin{itemize}
	\item If $x \in X$ and $\|x, y\|_{\beta} = 0$ for all $y \in X$, then $x = 0$. Moreover; the functions $x \to \|x, y\|_{\beta}$ are continuous functions of $X$ into $\mathbb{R}_+$ for each fixed $y \in E$.
\item If $\beta=1$, then $(X,\|\cdot,\cdot\|_{\beta})$ becomes a linear $2$-normed space.
\end{itemize}

In 2006, Park \cite{prk} introduced the concept of quasi-2-normed spaces and quasi-$(2,p)$-normed spaces and studied the properties of these spaces. 
\begin{definition}\label{d2}
Let  $X$ be a linear space over $\mathbb{K}$ with $dim\; X\ge 2$ and $\|\cdot,\cdot\|_{q}:X\times X\to \mathbb{R}_{+}$ be 
a function such that 
\begin{enumerate}
	\item [(D1)] $\|x,y\|_{q}= 0$ if and only if $\{x,y\}$ is linearly dependent;
	\item [(D2)] $\|x,y\|_{q}=\|y,x\|_{q}$ for all $x, y \in X$;
	\item [(D3)] $\|\lambda x,y\|_{q}=|\lambda| \|x,y\|_{q}$ for all $x, y \in X$ and all $\lambda\in \mathbb {K}$;
	\item [(D4)] There is a constant $\kappa\geq 1$ such that $\|x , y+z\|_{q}\le \kappa(\|x,y\|_{q}+\|x,z\|_{q})$ for all $x, y,z \in X$.
\end{enumerate}
 Then $\|\cdot,\cdot\|_{q}$ is called a \textit{quasi-$2$-norm on} $X$, the smallest possible $\kappa$ is called \textit{the modulus of concavity} and $(X,\|\cdot,\cdot\|_{q}, \kappa)$ is called a \textit{quasi-$2$-normed space}.
\end{definition}
A quasi-2-norm $\|\cdot,\cdot\|_{q}$ is called a \textit{quasi-$p$-$2$-norm} $(0 < p \le 1)$ if
$$\|x+y,z\|_{q}^p\le \|x,z\|_{q}^p+\|y,z\|_{q}^p$$
for all $x, y,z \in X.$ The first difference between a quasi-$2$-norm and a $2$-norm is that the modulus of concavity of a quasi-norm is greater than or equal to 1, while that of a $2$-norm is equal to 1. This causes the quasi-$2$-norm to be not continuous in general, while a norm is always continuous. Moreover, by Aoki-Rolewicz Theorem \cite{mal}, each quasi-norm is equivalent to some p-norm. In \cite{prk}, Park has shown  the following theorem.
\begin{theorem}{\cite[Theorem 3]{prk}}\label{tp} Let $(X,\|\cdot,\cdot\|_{q}, \kappa)$ be a quasi-$2$-normed space. There is $p\in(0,1]$ and an equivalent quasi-$2$-norm $\||\cdot,\cdot|\|_{q}$ on $X$ satisfying
$$\||x+y,z|\|_{q}^p\le \||x,z|\|_{q}^p+\||y,z|\|_{q}^p$$
for all $x, y,z \in X,$ with 
$$\||x,z|\|_{q}:=\inf\Big\{\Big(\sum_{i=1}^n\|x_i,z\|_{q}^p\Big)^{1/p}: x=\sum_{i=1}^nx_i,\; x_i\in X, \;z\in X,\; n\in\mathbb{N}\Big\}$$
and $p=\log_{2\kappa}2.$
\end{theorem}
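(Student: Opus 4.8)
The statement is the two-normed analogue of the classical Aoki--Rolewicz theorem, and the plan is to follow that scheme with the auxiliary vector $z$ held fixed, so that every estimate takes place in the first slot. For fixed $z \in X$ the map $x \mapsto \|x,z\|_q$ is a quasi-seminorm on $X$ with modulus of concavity at most $\kappa$ (non-negativity and homogeneity are (D1) and (D3); sub-additivity up to the constant $\kappa$ comes from (D2) together with (D4)), and $\||\cdot,z|\|_q$ is precisely the Aoki--Rolewicz $p$-seminorm built from it. Put $p := \log_{2\kappa}2 \in (0,1]$, equivalently $(2\kappa)^p = 2$, i.e. $2^{1/p} = 2\kappa$. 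Three facts are then immediate from the defining infimum: (i) the one-term decomposition $x = x$ gives $\||x,z|\|_q \le \|x,z\|_q$; (ii) concatenating a decomposition of $x$ with one of $y$ is a decomposition of $x+y$, so $\||x+y,z|\|_q^p \le \||x,z|\|_q^p + \||y,z|\|_q^p$ --- exactly the inequality asserted in the theorem; (iii) scaling every summand of a decomposition of $x$ by $\lambda \ne 0$ gives a decomposition of $\lambda x$ and multiplies each $\|x_i,z\|_q$ by $|\lambda|$, so $\||\lambda x,z|\|_q = |\lambda|\,\||x,z|\|_q$ (the case $\lambda = 0$ being trivial by (D1)); that is, (D3) holds for $\||\cdot,\cdot|\|_q$.

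The substance is the reverse estimate: there is a constant $C = C(\kappa) \ge 1$ with $\|x,z\|_q \le C\,\||x,z|\|_q$, equivalently every finite decomposition $x = \sum_{i=1}^n x_i$ satisfies $\|x,z\|_q \le C\big(\sum_{i=1}^n \|x_i,z\|_q^p\big)^{1/p}$. First I would prove, by induction on $k$ using only (D2) and (D4), the dyadic maximal bound $\big\|\sum_{i=1}^{2^k}x_i,z\big\|_q \le (2\kappa)^k \max_{1\le i\le 2^k}\|x_i,z\|_q$ --- the base case being the quasi-triangle inequality together with $\|x_1,z\|_q + \|x_2,z\|_q \le 2\max(\cdot)$, and the inductive step splitting the $2^{k+1}$ summands into two halves of $2^k$. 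Then, given an arbitrary decomposition, I would normalise so that $\sum_i \|x_i,z\|_q^p = 1$, reorder the summands so that $\|x_i,z\|_q$ is non-increasing, and partition the indices into the dyadic size-blocks $B_m = \{\,i : 2^{-m} < \|x_i,z\|_q^p \le 2^{1-m}\,\}$; since the masses $2^{-m}$ of the block elements sum to at most $1$ one has $|B_m| < 2^m$, and the maximal bound then gives $\big\|\sum_{i\in B_m} x_i, z\big\|_q \le (2\kappa)^m\cdot 2^{(1-m)/p} = 2^{1/p} = 2\kappa$ for each $m$. Recombining the block-sums --- the step where the exponent $p = \log_{2\kappa}2$ is used in an essential way, so that the relevant dyadic series converges to a constant multiple of $\big(\sum_i \|x_i,z\|_q^p\big)^{1/p}$ --- yields the claimed inequality; passing to the infimum over all decompositions gives $\|x,z\|_q \le C\,\||x,z|\|_q$. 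Equivalently, one may simply invoke the scalar Aoki--Rolewicz theorem \cite{mal} for the quasi-seminorm $x \mapsto \|x,z\|_q$.

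With equivalence of $\|\cdot,\cdot\|_q$ and $\||\cdot,\cdot|\|_q$ in hand, the remaining $2$-norm axioms are routine. (D1): if $\{x,z\}$ is linearly dependent then $\|x,z\|_q = 0$, hence $\||x,z|\|_q = 0$; conversely $\||x,z|\|_q = 0$ forces $\|x,z\|_q \le C\,\||x,z|\|_q = 0$, so $\{x,z\}$ is dependent. (D2): symmetry holds for the symmetric form of the construction (the infimum taken over simultaneous decompositions of both arguments), and in the one-sided form above it follows from (D2) of $\|\cdot,\cdot\|_q$. (D4): chain $\||x,y+z|\|_q \le \|x,y+z\|_q \le \kappa(\|x,y\|_q + \|x,z\|_q) \le \kappa C(\||x,y|\|_q + \||x,z|\|_q)$, so $\||\cdot,\cdot|\|_q$ is a quasi-$2$-norm, now with the additional $p$-subadditivity of (ii). The main obstacle is the combinatorial Aoki--Rolewicz recombination step in the second paragraph; everything else is bookkeeping organised around the identity $2^{1/p} = 2\kappa$.
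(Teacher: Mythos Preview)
The paper does not prove this theorem; it is quoted verbatim from Park \cite{prk} (introduced with ``In \cite{prk}, Park has shown the following theorem'') and then used as a black box in the proof of Theorem~\ref{tpp}. So there is no in-paper argument to compare against. Your plan --- freeze $z$, apply the scalar Aoki--Rolewicz construction to the quasi-seminorm $x\mapsto\|x,z\|_q$, then check the quasi-$2$-norm axioms --- is the natural route and is almost certainly what \cite{prk} does as well.

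One genuine gap in your sketch: the symmetry axiom (D2). For the one-sided formula in the statement (infimum over decompositions of the \emph{first} argument only), your claim that symmetry ``follows from (D2) of $\|\cdot,\cdot\|_q$'' is not justified. Knowing $\|x_i,z\|_q=\|z,x_i\|_q$ for each summand does not relate
\[
\inf_{x=\sum x_i}\Big(\sum_i\|x_i,z\|_q^p\Big)^{1/p}
\quad\text{to}\quad
\inf_{z=\sum z_j}\Big(\sum_j\|z_j,x\|_q^p\Big)^{1/p},
\]
since the two infima range over entirely different decompositions. You acknowledge the issue (mentioning a ``symmetric form of the construction'') but then wave it away for the one-sided form without argument. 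To close this you must either (a) symmetrise the definition from the outset, or (b) prove directly that the two one-sided infima coincide; neither is done. The rest of your outline --- the dyadic maximal bound $(2\kappa)^k\max_i\|x_i,z\|_q$, the size-block recombination exploiting $2^{1/p}=2\kappa$, and the verification of (D1), (D3), (D4) via the equivalence constants --- is correct.
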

\vspace*{2mm}

\section{Some properties of quasi-$(2,\beta)$-Banach spaces}\label{some}
In this section, we generalize the concept of quasi-$2$-normed spaces and we prove some properties of the quasi-$(2,\beta)$-Banach spaces.

\begin{definition}\label{d3}
Let $0 < \beta\le  1$ be a fixed real number  and  $X$ be a linear space over $\mathbb{K}$ with $dim\; X\ge 2$ and let $\|\cdot,\cdot\|_{q,\beta}:X\times X\to \mathbb{R}_{+}$ be 
a function such that 
\begin{enumerate}
	\item [(B1)] $\|x,y\|_{q,\beta}= 0$ if and only if $\{x,y\}$ is linearly dependent;
	\item [(B2)] $\|x,y\|_{q,\beta}=\|y,x\|_{q,\beta}$ for all $x, y \in X$;
	\item [(B3)] $\|\lambda x,y\|_{q,\beta}=|\lambda|^{\beta} \|x,y\|_{q,\beta}$ for all $x, y \in X$ and all $\lambda\in \mathbb {K}$;
	\item [(B4)] There is a constant $\kappa\geq 1$ such that $\|x , y+z\|_{q,\beta}\le \kappa(\|x,y\|_{q,\beta}+\|x,z\|_{q,\beta})$ for all $x, y,z \in X$.
\end{enumerate}
 Then $\|\cdot,\cdot\|_{q,\beta}$ is called a \textit{quasi-$(2,\beta)$-norm} on $X$, the smallest possible $\kappa$ is called the \textit{modulus of concavity} and $(X,\|\cdot,\cdot\|_{q,\beta}, \kappa)$ is called a \textit{quasi-$(2,\beta)$-normed space}.
\end{definition}
A quasi-2-norm $\|\cdot,\cdot\|_{q,\beta}$ is called a \textit{quasi-$p$-$(2,\beta)$-norm} $(0 < p \le 1)$ if
$$\|x+y,z\|_{q,\beta}^p\le \|x,z\|_{q,\beta}^p+\|y,z\|_{q,\beta}^p,\;\;\;x, y,z \in X,$$
and we have
$$\big|\|x,z\|_{q,\beta}^p-\|y,z\|_{q,\beta}^p\big|\le \|x+y,z\|_{q,\beta}^p,\;\;\;x, y,z \in X.$$
\begin{example} Let $X$ be a linear space with $dim X\ge 2$, and let $\|\cdot,\cdot\|_{\beta}$ be a $(2,\beta)$-norm on $X.$ Then
$$\|x,y\|_{q,\beta}=C\|x,y\|_{\beta},\;\;x,y\in X\;\; (\text{with}\;\; C>0 \;\;\text{is a constant}),$$
is a quasi-$(2,\beta)$-norm on $X$, and $(X,\|\cdot,\cdot\|_{q,\beta})$ is  a quasi-$(2,\beta)$-normed space
\end{example}
\begin{example}\label{e1} If $X$ is a quasi-2-norm space with the quasi-2-norm $\|\cdot,\cdot\|_{q}$ and the modulus of concavity $\kappa\geq 1$, then it is a quasi-$(2,\beta)$-normed space with the quasi-$(2,\beta)$-norm $\|x,y\|_{q,\beta}=\|x,y\|_{q}^{\beta}$ for all $x,y\in X$
and $0 < \beta\le  1$ is a fixed real number.
\end{example}
\begin{proof}
Indeed, for every $x,y,z\in X$ and $\lambda\in \mathbb{K}$, we have
$$\;\;\;\|x,y\|_{q,\beta}=0\Leftrightarrow \|x,y\|_{q}=0\Leftrightarrow \; \{x,y\}\;\;\text{ is linearly dependent},$$
$$\|y,x\|_{q,\beta}= \|y,x\|_{q}^{\beta}=\|x,y\|_{q}^{\beta}=\|x,y\|_{q,\beta}$$
$$\|\lambda x,y\|_{q,\beta} =\|\lambda x,y\|_{q}^{\beta}=|\lambda|^{\beta}\|x,y\|_{q,\beta}$$
and
\begin{align*}
\|x+y,z\|_{q,\beta}&=\|x+y,z\|_{q}^{\beta}\le  \kappa^{\beta}(\|x,z\|_{q}+\|y,z\|_{q})^{\beta}\\&\le \kappa^{\beta}(\|x,z\|_{q}^{\beta}+\|y,z\|_{q}^{\beta}) \\&= \kappa^{\beta}(\|x,z\|_{q,\beta}+\|y,z\|_{q,\beta}).
\end{align*}
As $\kappa^{\beta}\ge 1,$ then $(X,\|\cdot,\cdot\|_{q,\beta},\kappa^{\beta})$ is a quasi-$(2,\beta)$-normed space.
\end{proof}
\begin{lemma}\label{l1}
Let $(X,\|\cdot,\cdot\|_{q,\beta}, \kappa)$ be a quasi-$(2,\beta)$-normed space  with $0 < \beta\le  1$. If $x \in X$ and $\|x,y\|_{q,\beta}=0$ for all $y \in X$, then $x = 0.$
\end{lemma}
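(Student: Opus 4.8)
The plan is to argue by contraposition: I will show that if $x\neq 0$, then there is some $y\in X$ with $\|x,y\|_{q,\beta}\neq 0$, which contradicts the hypothesis. The only structural input needed is the assumption $\dim X\ge 2$ built into Definition~\ref{d3}.

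Concretely, suppose $x\neq 0$. Since $\dim X\ge 2$, the one-dimensional subspace $\mathrm{span}\{x\}$ is a proper subspace of $X$, so I can pick $y\in X\setminus\mathrm{span}\{x\}$. Then the set $\{x,y\}$ is linearly independent: any relation $\alpha x+\gamma y=0$ with $\gamma\neq 0$ would force $y\in\mathrm{span}\{x\}$, and with $\gamma=0$ it would force $\alpha=0$ because $x\neq 0$. By axiom (B1), linear independence of $\{x,y\}$ gives $\|x,y\|_{q,\beta}\neq 0$, contradicting the assumption that $\|x,y\|_{q,\beta}=0$ for all $y\in X$. Hence $x=0$.

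There is essentially no technical obstacle here; the lemma is a direct unwinding of axiom (B1) together with the dimension hypothesis, and the argument is identical in spirit to the corresponding statement for $(2,\beta)$-normed spaces recalled after the definition of $(2,\beta)$-norm. The only point worth stating carefully is the existence of a vector $y$ with $\{x,y\}$ linearly independent, which is exactly where $\dim X\ge 2$ is used (in a $1$-dimensional space every pair is dependent, so the conclusion would fail). If one prefers a direct rather than contrapositive formulation, one can equivalently note that for all $y\in X$ the vanishing $\|x,y\|_{q,\beta}=0$ forces $\{x,y\}$ dependent, i.e.\ $y\in\mathrm{span}\{x\}$ whenever $x\neq0$; since this cannot hold for every $y$ when $\dim X\ge 2$, we conclude $x=0$.
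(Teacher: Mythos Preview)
Your proof is correct and follows essentially the same approach as the paper's: assume $x\neq 0$, use $\dim X\ge 2$ to choose $y$ with $\{x,y\}$ linearly independent, and invoke axiom (B1) to obtain $\|x,y\|_{q,\beta}\neq 0$, a contradiction. The only difference is that you spell out the linear-independence verification and the direct reformulation in slightly more detail than the paper does.
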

\begin{proof} Suppose that $x\neq 0$. Since $dim X\ge 2$, choose $y\in X$ such that $\{x,y\}$ is linearly independent, and  by  Definition \ref{d3} (B1) we have $\|x,y\|_{q,\beta}\neq 0$. This is a contradiction, thus $x=0.$
\end{proof}
\begin{definition}\label{d4}
Let $X$ be a quasi-$(2,\beta)$-normed space  with $0 < \beta\le  1$.
\begin{enumerate}
	\item [(1)] A sequence $\{x_n\}$ in $ X$ is called a \textit{Cauchy sequence} if there are two points $y, z \in X$
such that $y$ and $z$ are linearly independent,
$$\lim_{m,n\to\infty}\|x_m-x_n,y\|_{q,\beta}=0=\lim_{m,n\to\infty}\|x_m-x_n,z\|_{q,\beta}.$$
	\item [(2)] A sequence $\{x_n\}$ in $ X$ is called a \textit{convergent sequence} if there is an $x \in X$ such that
	$$\lim_{n\to\infty}\|x_n-x,y\|_{q,\beta}=0\;\; \text{for all}\;\; y\in X.$$
	In this case we write we also write $\lim_{n\to\infty}x_n=x.$
	\item [(3)] A  quasi-$(2,\beta)$-normed space in which every Cauchy sequence is a convergent sequence is called a \textit{quasi-$(2,\beta)$-Banach space}.
\end{enumerate}
\end{definition}
\begin{remark}
The functions $x\to \|x,y\|_{q,\beta}$ are not necessary  continuous functions of $X$ into $\mathbb{R}$ for each fixed $y \in X.$
\end{remark}

\begin{theorem}\label{tpp} Let $(X,\|\cdot,\cdot\|_{q,\beta}, \kappa)$ be a quasi-$(2,\beta)$-normed space, with $0 < \beta\le  1$. There is $p\in(0,1]$ and an equivalent quasi-2-norm $\||\cdot,\cdot|\|_{q,\beta}$ on $X$ satisfying
$$\||x+y,z|\|_{q,\beta}^p\le \||x,z|\|_{q,\beta}^p+\||y,z|\|_{q,\beta}^p$$
for all $x, y,z \in X,$ with 
$$\||x,z|\|_{q,\beta}:=\inf\Big\{\Big(\sum_{i=1}^n\|x_i,z\|_{q,\beta}^{p/\beta}\Big)^{\beta/p}:\; x=\sum_{i=1}^nx_i,\; x_i\in X,\; z\in X,\; n\in\mathbb{N}\Big\}$$
and $p=\beta\log_{2\kappa}2.$
\end{theorem}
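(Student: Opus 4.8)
The plan is to reduce Theorem~\ref{tpp} to Park's Aoki--Rolewicz-type result (Theorem~\ref{tp}) by passing to an auxiliary quasi-$2$-norm. First I would set $\|x,y\|_{q}:=\|x,y\|_{q,\beta}^{1/\beta}$ for $x,y\in X$ and check that $(X,\|\cdot,\cdot\|_{q})$ is a quasi-$2$-normed space in the sense of Definition~\ref{d2}: conditions (D1)--(D3) are immediate from (B1)--(B3), and for (D4) one combines (B4) with the elementary inequality $(a+b)^{1/\beta}\le 2^{1/\beta-1}\big(a^{1/\beta}+b^{1/\beta}\big)$ for $a,b\ge0$, which holds because $t\mapsto t^{1/\beta}$ is convex (as $1/\beta\ge1$). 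Taking $a=\|x,y\|_{q,\beta}$ and $b=\|x,z\|_{q,\beta}$ yields
$$\|x,y+z\|_{q}\le\kappa^{1/\beta}\big(\|x,y\|_{q,\beta}+\|x,z\|_{q,\beta}\big)^{1/\beta}\le\kappa'\big(\|x,y\|_{q}+\|x,z\|_{q}\big),\qquad\kappa':=\frac{(2\kappa)^{1/\beta}}{2},$$
and $\kappa'\ge1$ since $\kappa\ge1$ and $1/\beta\ge1$.

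Next I would apply (the proof of) Theorem~\ref{tp} to $\|\cdot,\cdot\|_{q}$ with the quasi-triangle constant $\kappa'$ and exponent $p:=\beta\log_{2\kappa}2$. This choice is admissible: $p\in(0,1]$ because $\kappa\ge1$ forces $\log_{2\kappa}2\le1$, and $(2\kappa')^{p}=\big((2\kappa)^{1/\beta}\big)^{\beta\log_{2\kappa}2}=(2\kappa)^{\log_{2\kappa}2}=2$, which is exactly what the Aoki--Rolewicz construction requires; if the true modulus of concavity of $\|\cdot,\cdot\|_{q}$ happens to be smaller than $\kappa'$, then $p$ merely drops below the maximal admissible exponent and the construction still applies. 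This produces an equivalent quasi-$2$-norm
$$\||x,z|\|_{q}:=\inf\Big\{\Big(\sum_{i=1}^{n}\|x_{i},z\|_{q}^{p}\Big)^{1/p}:\ x=\sum_{i=1}^{n}x_{i},\ x_{i}\in X,\ n\in\NN\Big\}$$
on $X$ satisfying $\||x+y,z|\|_{q}^{p}\le\||x,z|\|_{q}^{p}+\||y,z|\|_{q}^{p}$ for all $x,y,z\in X$.

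Finally I would define $\||x,z|\|_{q,\beta}:=\||x,z|\|_{q}^{\beta}$. Substituting $\|x_{i},z\|_{q}=\|x_{i},z\|_{q,\beta}^{1/\beta}$ in the infimum above and then raising to the power $\beta$ reproduces verbatim the formula for $\||\cdot,\cdot|\|_{q,\beta}$ in the statement. By Example~\ref{e1} applied to the quasi-$2$-norm $\||\cdot,\cdot|\|_{q}$, the function $\||\cdot,\cdot|\|_{q,\beta}$ is a quasi-$(2,\beta)$-norm, and it is equivalent to $\|\cdot,\cdot\|_{q,\beta}$ because raising the equivalence estimates between $\||\cdot,\cdot|\|_{q}$ and $\|\cdot,\cdot\|_{q}$ to the power $\beta$ gives the analogous estimates between $\||\cdot,\cdot|\|_{q,\beta}$ and $\|\cdot,\cdot\|_{q,\beta}$ (recall $\|x,z\|_{q}^{\beta}=\|x,z\|_{q,\beta}$). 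For the required $p$-$(2,\beta)$-inequality I would use $\||x,z|\|_{q,\beta}^{p}=\||x,z|\|_{q}^{\beta p}$ together with the subadditivity of $t\mapsto t^{\beta}$ on $[0,\infty)$ (valid as $0<\beta\le1$):
$$\||x+y,z|\|_{q,\beta}^{p}=\big(\||x+y,z|\|_{q}^{p}\big)^{\beta}\le\big(\||x,z|\|_{q}^{p}+\||y,z|\|_{q}^{p}\big)^{\beta}\le\||x,z|\|_{q}^{\beta p}+\||y,z|\|_{q}^{\beta p}=\||x,z|\|_{q,\beta}^{p}+\||y,z|\|_{q,\beta}^{p}.$$

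I expect the only genuinely delicate points to be the exponent bookkeeping that makes the auxiliary constant $\kappa'$ produce precisely $p=\beta\log_{2\kappa}2$, and the justification that the Aoki--Rolewicz construction may be run with the (possibly non-minimal) constant $\kappa'$; the verification of the quasi-$(2,\beta)$-norm axioms for $\||\cdot,\cdot|\|_{q,\beta}$ and of its equivalence with $\|\cdot,\cdot\|_{q,\beta}$ are then routine.
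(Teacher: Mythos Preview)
Your proposal is correct and follows essentially the same route as the paper: define the auxiliary quasi-$2$-norm $\|x,y\|_{q}:=\|x,y\|_{q,\beta}^{1/\beta}$, use convexity of $t\mapsto t^{1/\beta}$ to obtain the quasi-triangle constant $\kappa'=(2\kappa)^{1/\beta}/2$, apply Theorem~\ref{tp} with this constant (so that $p=\log_{2\kappa'}2=\beta\log_{2\kappa}2$), and then set $\||\cdot,\cdot|\|_{q,\beta}:=\||\cdot,\cdot|\|_{q}^{\beta}$, recovering the stated formula and the $p$-inequality via subadditivity of $t\mapsto t^{\beta}$. Your discussion of why the Aoki--Rolewicz construction may be run with the possibly non-minimal constant $\kappa'$ is a point the paper leaves implicit.
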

\begin{proof}Let $(X,\|\cdot,\cdot\|_{q,\beta}, \kappa)$ be a quasi-$(2,\beta)$-normed space with $0 < \beta\le  1,$ and let $x,z\in X$ such that 
$$\||x,z|\|_{q}:=\inf\Big\{\Big(\sum_{i=1}^n\|x_i,z\|_{q,\beta}^{p/\beta}\Big)^{1/p}: x=\sum_{i=1}^nx_i, x_i\in X,  n\in\mathbb{N}\Big\},$$
$p=\beta\log_{2\kappa}2.$ 

We will demonstrate  that $\|\cdot,\cdot\|_{q}:=\|\cdot,\cdot\|_{q,\beta}^{\frac{1}{\beta}}$ is a quasi-2-norm on $X$ with the modulus of concavity $\frac{(2\kappa)^{\frac{1}{\beta}}}{2}.$ 

For that, we assume that $\|\cdot,\cdot\|_{q,\beta}$ is a quasi-$(2,\beta)$-norm on $X$. Then for every $x,y,z\in X$ and for every $\lambda\in \mathbb{K}$, we have
 $$\|x,y\|_{q}=0\Leftrightarrow\|x,y\|_{q,\beta}^{\frac{1}{\beta}}=0\Leftrightarrow\|x,y\|_{q,\beta}=0\Leftrightarrow \{x,y\} \;\;\;\text{is linearly dependent};$$
 $$\|\lambda x,y\|_{q}=\|\lambda x,y\|_{q,\beta}^{\frac{1}{\beta}}=|\lambda|\| x,y\|_{q},\quad \;\;\;\|y,x\|_{q}=\| x,y\|_{q};$$
and 
\begin{align}\label{inq}
\|x+y,z\|_{q}=\|x+y,z\|_{q,\beta}^{1/\beta}\le  \kappa^{1/\beta}(\|x,z\|_{q,\beta}+\|y,z\|_{q,\beta})^{1/\beta}.
\end{align}
 Let $g(t)=t^{r}$ for all $t>0$ with $r\ge 1$. It is easy to check that $g$ is a convex function,  hence  by the definition of convexity, we have
$$g\left(\frac{t+s}{2}\right)\le \frac{1}{2}(g(t)+g(s))\;\;\text{ for all}\;\; t,s>0,$$
i.e., $(t+s)^r\le 2^{r-1}(t^r+s^r).$ So, \eqref{inq} becomes
\begin{align*}
\|x+y,z\|_{q}\le  \frac{(2\kappa)^{1/\beta}}{2}\big(\|x,z\|_{q,\beta}^{1/\beta}+\|y,z\|_{q,\beta}^{1/\beta}\big)= \frac{(2\kappa)^{1/\beta}}{2}\big(\|x,z\|_{q}+\|y,z\|_{q}\big).
\end{align*}
It follows that $\Big(X,\|\cdot,\cdot\|_{q}, \frac{(2\kappa)^{1/\beta}}{2}\Big)$ is a quasi-2-normed space. From Theorem \ref{tp}, we get a quasi-2-norm $\||\cdot,\cdot|\|_{q}$  on $X$ satisfying
$$\||x+y,z|\|_{q}^p\le \||x,z|\|_{q}^p+\||y,z|\|_{q}^p,\;\;\; x,y,z\in X,$$
and there exist $\mu_1, \mu_2>0$ such that
$$\mu_1\|x,z\|_{q,\beta}^{1/\beta}=\mu_1\|x,z\|_{q}\le \||x,z|\|_{q}\le \mu_2\|x,z\|_{q}=\mu_2\|x,z\|_{q,\beta}^{1/\beta}\;\;\;\;\; \forall x,y\in X.$$ 
Since $0 < \beta\le  1$ and by Example \ref{e1}, we get $\||\cdot,\cdot|\|_{q,\beta}:= \||\cdot,\cdot|\|_{q}^{\beta}$ is also a 
quasi-$(2,\beta)$-norm on $X$ and
\begin{align*}
\||x+y,z|\|_{q,\beta}^p&=\||x+y,z|\|_{q}^{p\beta}\le (\||x,z|\|_{q}^{p}+\||y,z|\|_{q}^{p})^{\beta}\\&\le \||x,z|\|_{q}^{p\beta}+\||y,z|\|_{q}^{p\beta}=\||x,z|\|_{q,\beta}^p+\||y,z|\|_{q,\beta}^p
\end{align*}
for all $x,y,z\in X.$ Also we have
$$\mu_1^{\beta}\|x,z\|_{q,\beta}\le \||x,z|\|_{q}^{\beta}\le \mu_2^{\beta}\|x,z\|_{q,\beta}\;\;\; \quad\forall x,z\in X,$$ 
i.e.,
\begin{align}\label{eq}
C_1\|x,z\|_{q,\beta}\le \||x,z|\|_{q,\beta}\le C_2\|x,z\|_{q,\beta}\;\;\;\quad \forall x,z\in X,
\end{align}
with $C_i=\mu_i^{\beta}$ for $i\in\{1,2\}$. This completes the proof.
\end{proof}

\begin{remark} It follows from Theorem \ref{tpp} that
\begin{enumerate}
\item[(i)] $\big|\||x,z|\|_{q,\beta}^p-\||y,z|\|_{q,\beta}^p\big|\le \||x-y,z|\|_{q,\beta}^p,\;\;x,y,z\in X.$
	\item[(ii)] The functions $x\to \||x,y|\|_{q,\beta}$ are   continuous functions of $X$ into $\mathbb{R}$ for each fixed $y \in X.$
	\item[(iii)] If $\beta=1$, we get the Theorem \ref{tp}.
\end{enumerate}
\end{remark}

\section{A new fixed point theorem}\label{fpt}
In this section, we prove that the fixed point theorem \cite[Theorem 2.1]{dang} remains valid in the setting of quasi-$(2,\beta)$-Banach space. Let us introduce the following four hypotheses:\\

\textbf{(A1)} $W$ is a nonempty set, $(X,\|\cdot,\cdot\|_{q,\beta},\kappa)$ is a quasi-$(2,\beta)$-Banach space. 
\\

 \textbf{(A2)} $f_{i}:W\to W$ and $L_{i}:W\times X\to \mathbb{R}_+$ are given maps for $i=1,\ldots,j$.
\\

\textbf{(A3)} $\mathcal{T}:X^W\to X^W$ is an operator satisfying the inequality
$$\left\|(\mathcal{T}\xi)(x)-(\mathcal{T}\mu)(x),y\right\|_{q,\beta}\leq \sum_{i=1}^{j}L_{i}(x,y)\left\|\xi(f_{i}(x))-\mu(f_{i}(x)),y\right\|_{q,\beta}$$
for all $\xi, \mu\in X^W$  and $ (x,y)\in W\times X$.
\\

\textbf{(A4)} $\Lambda:\mathbb{R}_{+}^{W\times X}\to \mathbb{R}_{+}^{W\times X}$ is a linear operator defined by 
$$(\Lambda\delta)(x,y):=\sum_{i=1}^{j}L_{i}(x,y)\delta(f_{i}(x),y),\quad\delta\in \mathbb{R}_{+}^{W\times X},\;\;(x,y)\in W\times X. $$

\begin{theorem} \label{pl1} Let hypotheses {\rm\textbf{(A1)}-\textbf{(A4)}} be valid, and let $\varepsilon:W\times X\to\mathbb{R}_{+},$  $\varphi: W\to X$ satisfy the conditions
\begin{align}\label{f1}
\left\|(\mathcal{T}\varphi)(x)-\varphi(x),y\right\|_{\beta}\leq \varepsilon(x,y),\;\;(x,y)\in W\times X,
\end{align}
\begin{align}\label{f2}
\varepsilon^{*}(x,y):=\sum_{n=0}^{\infty}(\Lambda^n\varepsilon)^{\theta}(x,y)< \infty, \;\; (x,y)\in W\times X,
\end{align}
where $\theta=\beta\log_{2\kappa}2.$ Then the  limit
\begin{align}\label{f3}
\psi(x)=\lim_{n\to \infty}\mathcal{T}^n\varphi(x), \;\; x\in W
\end{align}
exists and the  function $\psi:W\to X$  defined by \eqref{f3} is a fixed point of $\mathcal{T}$ satisfying
\begin{align}\label{f4}
\left\|\varphi(x)-\psi(x),y\right\|^{\theta}_{q,\beta} \leq K\varepsilon^{*}(x,y), \;\;(x,y)\in W\times X.
\end{align}
for some constant $K>0$. Moreover, if
 \begin{align}\label{f04}
\varepsilon^{*}(x,y)\le \left(M\sum_{n=0}^{\infty}(\Lambda^n\varepsilon)(x,y)\right)^{\theta}< \infty
\end{align}
for some positive real number $M$, then $\psi$ is the unique fixed point of $\mathcal{T}$ satisfying \eqref{f4}.
\end{theorem}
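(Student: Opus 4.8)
The plan is to reduce the quasi-$(2,\beta)$ setting to the ordinary $2$-Banach setting by applying Theorem~\ref{tpp}, and then invoke (the proof of) the fixed point result of Brzd\k{e}k et al. \cite[Theorem~1]{krs} / Dung et al. \cite[Theorem~2.1]{dang}. By Theorem~\ref{tpp} there are $p\in(0,1]$ with $p=\beta\log_{2\kappa}2=\theta$, an equivalent quasi-$(2,\beta)$-norm $\||\cdot,\cdot|\|_{q,\beta}$ that is $p$-subadditive, and constants $C_1,C_2>0$ with $C_1\|x,z\|_{q,\beta}\le\||x,z|\|_{q,\beta}\le C_2\|x,z\|_{q,\beta}$. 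Working with $\||\cdot,\cdot|\|_{q,\beta}$, the operator bound in \textbf{(A3)} transforms, after raising to the power $\theta$ and using $p$-subadditivity, into
$$\||(\mathcal{T}\xi)(x)-(\mathcal{T}\mu)(x),y|\|_{q,\beta}^{\theta}\le \sum_{i=1}^{j}\widetilde{L}_i(x,y)\,\||\xi(f_i(x))-\mu(f_i(x)),y|\|_{q,\beta}^{\theta},$$
where $\widetilde{L}_i(x,y)=(C_2/C_1)^{\theta}L_i(x,y)^{\theta}$ (the constant factor coming from the two-sided equivalence \eqref{eq}). Define $\delta_\varphi(x,y):=\||(\mathcal{T}\varphi)(x)-\varphi(x),y|\|_{q,\beta}^{\theta}$; then \eqref{f1} and \eqref{eq} give $\delta_\varphi(x,y)\le C_2^{\theta}\varepsilon(x,y)^{\theta}$ (noting $\varepsilon$ is measured in the $(2,\beta)$-norm of the hypothesis; here I would also use that $\|\cdot,\cdot\|_\beta$ and $\|\cdot,\cdot\|_{q,\beta}$ differ by a constant — see the first Example — or simply read \eqref{f1} with $\|\cdot,\cdot\|_{q,\beta}$, which is the natural reading).

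The core iteration is then the standard one: set $\varphi_n:=\mathcal{T}^n\varphi$. Using \textbf{(A3)} with $\xi=\varphi_{n}$, $\mu=\varphi_{n-1}$ and iterating, one shows by induction that
$$\||\varphi_{n+1}(x)-\varphi_n(x),y|\|_{q,\beta}^{\theta}\le (\Lambda_\theta^{n}\delta_\varphi)(x,y),$$
where $\Lambda_\theta$ is the linear operator $(\Lambda_\theta\sigma)(x,y)=\sum_{i=1}^j\widetilde{L}_i(x,y)\sigma(f_i(x),y)$; here one must check that $\Lambda_\theta^n\delta_\varphi$ is dominated by a constant multiple of $(\Lambda^n\varepsilon)^{\theta}$, which follows because $\widetilde L_i$ is a constant times $L_i^\theta$, so $\Lambda_\theta^n\delta_\varphi \le (\text{const})^n\,(\Lambda^n(\varepsilon^\theta))$, and then one absorbs the constants into \eqref{f2} — this is the one slightly delicate bookkeeping point and is the step I expect to require the most care, since $(\Lambda^n\varepsilon)^\theta$ versus $\Lambda^n(\varepsilon^\theta)$ are not literally equal and one needs $\theta\le 1$ plus the $p$-subadditivity to push the power inside. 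Granting this, $\sum_n \||\varphi_{n+1}(x)-\varphi_n(x),y|\|_{q,\beta}^{\theta}\le K'\varepsilon^*(x,y)<\infty$ by \eqref{f2}, so for each fixed $x$ and each $y$ the sequence $(\varphi_n(x))$ is Cauchy in the $p$-subadditive quasi-$(2,\beta)$-norm (a tail estimate: $\||\varphi_m(x)-\varphi_n(x),y|\|_{q,\beta}^{\theta}\le\sum_{k\ge n}\||\varphi_{k+1}(x)-\varphi_k(x),y|\|^{\theta}$). By equivalence \eqref{eq} it is Cauchy in $\|\cdot,\cdot\|_{q,\beta}$, hence by completeness \eqref{f3} defines $\psi(x)=\lim_n\varphi_n(x)$, with convergence in $X$.

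To finish: passing to the limit $n\to\infty$ in $\||\varphi(x)-\varphi_n(x),y|\|_{q,\beta}^\theta\le\sum_{k=0}^{n-1}\||\varphi_{k+1}(x)-\varphi_k(x),y|\|^\theta\le K'\varepsilon^*(x,y)$ and using continuity of $t\mapsto\||t,y|\|_{q,\beta}$ (Remark following Theorem~\ref{tpp}(ii)) yields \eqref{f4} with $K=K'/C_1^\theta$ after translating back to $\|\cdot,\cdot\|_{q,\beta}$ via \eqref{eq}. That $\psi$ is a fixed point: $\||(\mathcal{T}\psi)(x)-\psi(x),y|\|\le\kappa'(\||(\mathcal{T}\psi)(x)-(\mathcal{T}\varphi_n)(x),y|\|+\||\varphi_{n+1}(x)-\psi(x),y|\|)$; the first term is bounded via \textbf{(A3)} by $\sum_i\widetilde L_i(x,y)^{1/\theta}\||\psi(f_i(x))-\varphi_n(f_i(x)),y|\|\to 0$ and the second $\to0$, so $(\mathcal{T}\psi)(x)=\psi(x)$. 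For uniqueness under \eqref{f04}: if $\psi'$ is another fixed point satisfying \eqref{f4}, then $\||\psi(x)-\psi'(x),y|\|_{q,\beta}^\theta\le (\Lambda_\theta^n(\text{bound}))(x,y)$ for every $n$ by iterating \textbf{(A3)}, and the bound \eqref{f04} — which says $\varepsilon^*$ is controlled by $\theta$-th power of a convergent $\Lambda$-series — forces $(\Lambda_\theta^n(\cdots))(x,y)\to 0$ as $n\to\infty$ (the tail of a convergent series), whence $\psi=\psi'$. Throughout, the only genuinely new ingredient over \cite{dang,krs} is the opening reduction via Theorem~\ref{tpp}; everything after that is the verbatim argument of those papers with $p$ replaced by $\theta=\beta\log_{2\kappa}2$.
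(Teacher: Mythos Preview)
Your overall strategy---use Theorem~\ref{tpp} to pass to an equivalent $p$-subadditive quasi-$(2,\beta)$-norm with $p=\theta$, then run the Brzd\k{e}k--Ciepli\'nski/Dung--Hang iteration---is exactly the paper's strategy, and the parts on Cauchy convergence, the fixed-point property, and uniqueness are essentially the paper's arguments. However, the step you yourself flag as ``the one slightly delicate bookkeeping point'' is a genuine gap, and your proposed resolution does not work.

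The issue is the \emph{order} in which you use the equivalence \eqref{eq} and the contraction \textbf{(A3)}. By first transporting \textbf{(A3)} into the $\||\cdot,\cdot|\|_{q,\beta}$-norm you pick up a factor $(C_2/C_1)^{\theta}$ in each application, so after $n$ iterations your bound carries $(C_2/C_1)^{n\theta}$, which blows up (in general $C_2>C_1$) and cannot be ``absorbed into \eqref{f2}''. Worse, even dropping that constant, your operator $\Lambda_\theta$ has coefficients $L_i^{\theta}$, and for $\theta\le 1$ subadditivity gives only
\[
(\Lambda^n\varepsilon)^{\theta}=\Bigl(\sum_{\text{paths}}L_{i_1}\cdots L_{i_n}\varepsilon\Bigr)^{\theta}\le \sum_{\text{paths}}L_{i_1}^{\theta}\cdots L_{i_n}^{\theta}\varepsilon^{\theta},
\]
which is the \emph{wrong} inequality: it bounds $(\Lambda^n\varepsilon)^{\theta}$ above by $\Lambda_\theta^n(\varepsilon^{\theta})$, not the reverse. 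So convergence of $\sum(\Lambda^n\varepsilon)^{\theta}$ in \eqref{f2} does not yield convergence of $\sum\Lambda_\theta^n(\varepsilon^{\theta})$, and the Cauchy estimate fails as written.

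The fix---and this is precisely what the paper does---is to reverse the order: first prove by induction, \emph{in the original norm} $\|\cdot,\cdot\|_{q,\beta}$ where \textbf{(A3)} and \textbf{(A4)} are formulated with no extra constants, that
\[
\big\|(\mathcal{T}^{n+1}\varphi)(x)-(\mathcal{T}^{n}\varphi)(x),y\big\|_{q,\beta}\le (\Lambda^n\varepsilon)(x,y),
\]
and \emph{only then} switch to $\||\cdot,\cdot|\|_{q,\beta}$ for the telescoping step. This introduces a single factor $C_2^{\theta}$ (from one use of \eqref{eq}), giving
\[
\big\|\big|(\mathcal{T}^{n}\varphi)(x)-(\mathcal{T}^{n+k}\varphi)(x),y\big|\big\|_{q,\beta}^{\theta}\le C_2^{\theta}\sum_{i=n}^{n+k-1}(\Lambda^{i}\varepsilon)^{\theta}(x,y),
\]
which is exactly controlled by \eqref{f2}. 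With this correction, the remainder of your sketch (completeness, continuity of $\||\cdot,\cdot|\|_{q,\beta}$, fixed point via \textbf{(A3)}, uniqueness via \eqref{f04}) goes through and matches the paper's proof.
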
 
\begin{proof} It is easy to show by induction that, for any $n\in\mathbb{N}_0$
\begin{align}\label{f5}
\big\|(\mathcal{T}^{n+1}\varphi)(x)-(\mathcal{T}^{n}\varphi)(x),y\big\|_{q,\beta}\leq (\Lambda^n\varepsilon)(x,y),\;\; (x,y)\in W\times X.
\end{align}

Indeed, by \eqref{f1}, we have 
$$\big\|\varphi(x)-(\mathcal{T}\varphi)(x),y\big\|_{q,\beta}\leq \varepsilon(x,y)=(\Lambda^0\varepsilon)(x,y),\;\; (x,y)\in W\times X.$$
Then the case $n=0$ is true. Now, fix an $n\in\mathbb{N}_0$ and suppose that \eqref{f5} is true. Then, by (A3)-(A4), for any $(x,y)\in W\times X$, we get
 \begin{align*}
\big\|(\mathcal{T}^{n+1}\varphi)(x)&-(\mathcal{T}^{n+2}\varphi)(x),x_2,y\big\|_{q,\beta}=
\big\|\mathcal{T}(\mathcal{T}^{m+1}\varphi)(x)-\mathcal{T}(\mathcal{T}^{m}\varphi)(x),y\big\|_{q,\beta}
\\&\leq \sum_{1\le i\le j}L_i(x,y)\big\|\mathcal{T}^{n}\varphi(f_i(x))-\mathcal{T}^{n+1}\varphi(f_i(x)),y\big\|_{q,\beta}
\\&\leq \sum_{1\le i\le j}L_i(x,y)(\Lambda^n\varepsilon)(f_i(x),y)
=(\Lambda^{n+1}\varepsilon)(x,y).
\end{align*}
So, \eqref{f5} holds for all $n\in\mathbb{N}_0$.

Next, from \eqref{f5}, \eqref{eq} and Theorem \ref{tpp}, for every $k\in\mathbb{N},$ $n\in\mathbb{N}_0$ and $(x,y)\in W\times X$, we have 
\begin{align}\label{f6}
\big\|\big|(\mathcal{T}^{n}\varphi)(x)-(\mathcal{T}^{n+k}\varphi)(x),&y\big|\big\|_{q,\beta}^{\theta}=
\big\|\big|\sum_{i=0}^{k-1}\big((\mathcal{T}^{n+i}\varphi)(x)-(\mathcal{T}^{n+i+1}\varphi)(x)\big),y\big|\big\|_{q,\beta}^{\theta}\nonumber
\\&\leq \sum_{i=0}^{k-1}\big\|\big|(\mathcal{T}^{n+i}\varphi)(x)-(\mathcal{T}^{n+i+1}\varphi)(x),y\big|\big\|_{q,\beta}^{\theta}\nonumber
\\&\le C_2^{\theta} \sum_{i=0}^{k-1}\big\|(\mathcal{T}^{n+i}\varphi)(x)-(\mathcal{T}^{n+i+1}\varphi)(x),y\big\|_{q,\beta}^{\theta}\nonumber
\\&\le C_2^{\theta} \sum_{i=0}^{k-1}(\Lambda^{n+i}\varepsilon)^{\theta}(x,y)\nonumber
= C_2^{\theta} \sum_{i=n}^{n+k-1}(\Lambda^{i}\varepsilon)^{\theta}(x,y)\nonumber
\\&\le C_2^{\theta}\varepsilon^{*}(x,y)\;\; \text{for some}\;\; C_2>0.
\end{align}
By the convergence of the series $\sum_{n\ge 0}(\Lambda^{n}\varepsilon)^{\theta}(x,y)$, it follows from \eqref{f6} that, for every $x\in W$, $\big\{(\mathcal{T}^{n}\varphi)(x)\big\}_{n\in\mathbb{N}}$ is a Cauchy sequence in $(X,\||\cdot,\cdot|\|_{q,\beta}).$ By Theorem \ref{tpp}, $\big\{(\mathcal{T}^{n}\varphi)(x)\big\}_{n\in\mathbb{N}}$ is also a Cauchy sequence in $(X,\|\cdot,\cdot\|_{q,\beta},\kappa)$. As $(X,\|\cdot,\cdot\|_{q,\beta},\kappa)$ is a quasi-$(2,\beta)$-Banach space, then the limit $\psi(x):=\lim_{m\to \infty}(\mathcal{T}^m\varphi)(x)$ exists for any $x\in W$, so \eqref{f3} holds.

Since $\||\cdot,\cdot|\|_{q,\beta}$ is continuous and taking $n=0$ and $k\to\infty$ in \eqref{f6}, we get
\begin{align}\label{f7}
\||\varphi(x)-\psi(x),y|\|_{q,\beta}^{\theta}\le  C_2^{\theta}\varepsilon^{*}(x,y)
\end{align}
for all $(x,y)\in W\times X$  and for some $C_2>0.$ From \eqref{eq}, we find
$$
\|\varphi(x)-\psi(x),y\|_{q,\beta}^{\theta}\le C_1^{-\theta}\||\varphi(x)-\psi(x),y|\|_{q,\beta}^{\theta}\le  (C_2/C_1)^{\theta}\varepsilon^{*}(x,y)
$$
for all $(x,y)\in W\times X$  and for some $C_1,C_2>0,$ so \eqref{f4} holds with $K:=(C_2/C_1)^{\theta}$.

By applying (A3), \eqref{f3} and \eqref{eq}, we get 
\begin{align}\label{f8}
\big\|\big(\mathcal{T}^{n+1}\varphi)(x)-(\mathcal{T}\psi)(x),&y\big\|_{q,\beta}
\leq \sum_{i=1}^{j}L_i(x,y)\big\|\big(\mathcal{T}^{n}\varphi)(f_i(x))-\psi(f_i(x)),y\big\|_{q,\beta}
\nonumber
\\& \le C_1^{-1}  \sum_{i=1}^{j}L_i(x,y)\big\|\big|\big(\mathcal{T}^{n}\varphi)(f_i(x))-\psi(f_i(x)),y\big|\big\|_{q,\beta}
\end{align}
for all $(x,y)\in W\times X$,  $n\in\mathbb{N}$  and for some $C_1>0.$ Letting $n\to\infty$ in \eqref{f8}, we find $$\lim_{n\to\infty}\big\|\big(\mathcal{T}^{n+1}\varphi)(x)-(\mathcal{T}\psi)(x),y\big\|_{q,\beta}=0$$ for all $(x,y)\in W\times X$, that is, 
$\lim_{n\to\infty}\big(\mathcal{T}^{n+1}\varphi)(x)=(\mathcal{T}\psi)(x)$ for all $x\in W$. This prove $\mathcal{T}\psi=\psi.$ So $\psi$ is a fixed point of $\mathcal{T}$ that satisfies \eqref{f4}.

 It remains to prove the uniqueness  of $\psi$. Let $\gamma$ be also a fixed point of $\mathcal{T}$ satisfying \eqref{f4}. For every $m\in\mathbb{N}_0,$ we show that
\begin{align}\label{f9}
\|\psi(x)-\gamma(x),y\|_{q,\beta}
=\big\|\big(\mathcal{T}^{m}\psi)(x)-(\mathcal{T}^m\gamma)(x),y\big\|_{q,\beta}
\leq (2K)^{1/\theta}M\sum_{i=m}^{\infty} (\Lambda^{i}\varepsilon)(x,y)
\end{align}
for all $(x,y)\in W\times X$  and for some $K,M>0.$ Indeed, for $m=0$ and from \eqref{f7}, we have
$$\||\psi(x)-\gamma(x),y|\|_{q,\beta}^{\theta}\le\||\psi(x)-\varphi(x),y|\|_{q,\beta}^{\theta}+\||\varphi(x)-\gamma(x),y|\|_{q,\beta}^{\theta}\le  2C_2^{\theta}\varepsilon^{*}(x,y)$$
for all $(x,y)\in W\times X$  and for some $C_2>0.$ From \eqref{eq} and \eqref{f04} we have
\begin{align*}
\|\psi(x)-\gamma(x),y\|_{q,\beta}^{\theta}&\le C_1^{-\theta}\||\psi(x)-\gamma(x),y|\|_{q,\beta}^{\theta}
\le (C_2/C_1)^{\theta}\varepsilon^{*}(x,y)
\\&\le 2K \left(M\sum_{i=0}^{\infty} (\Lambda^{i}\varepsilon)(x,y)\right)^{\theta},\;\;(x,y)\in W\times X.
\end{align*}
Thus, \eqref{f9} holds for $m=0.$

 Now, assume that \eqref{f9} is valid for some $m\in\mathbb{N}_0.$ By (A3) and \eqref{f9}, we obtain 
\begin{align*}
\big\|\big(\mathcal{T}^{m+1}\psi)(x)-(\mathcal{T}^{m+1}\gamma)(x),y\big\|_{q,\beta}
&\le \sum_{k=1}^jL_k(x,y)\big\|\big(\mathcal{T}^{m}\psi)(f_k(x))-(\mathcal{T}^{m}\gamma)(f_k(x)),y\big\|_{q,\beta}
\\&\le\sum_{k=1}^jL_k(x,y) (2K)^{1/\theta}M\sum_{i=m}^{\infty} (\Lambda^{i}\varepsilon)(x,y)
\\&= (2K)^{1/\theta}M\sum_{i=m+1}^{\infty} (\Lambda^{i}\varepsilon)(x,y),\;\; x,y\in X.
\end{align*}
So, \eqref{f9} holds for all $m\in\mathbb{N}_0.$ It follows from \eqref{eq} and \eqref{f9} that 
\begin{align}\label{f11}
\big\|\big|\big(\mathcal{T}^{m}\psi)(x)-(\mathcal{T}^{m}\gamma)(x),y\big|\big\|_{q,\beta}^{\theta}
&\le C_2^{\theta}\big\|\big(\mathcal{T}^{m}\psi)(x)-(\mathcal{T}^{m}\gamma)(x),y\big\|_{q,\beta}^{\theta}\nonumber
\\&\le 2KC_2^{\theta}  \left(M\sum_{i=m}^{\infty} (\Lambda^{i}\varepsilon)(x,y)\right)^{\theta}
\end{align}
for all $(x,y)\in W\times X$ and all $m\in\mathbb{N}_0.$ Letting $m\to\infty$ in \eqref{f11} and from \eqref{f04}, we get $\big\|\big|\psi(x)-\gamma(x),y\big|\big\|_{q,\beta}=0$ for all $y\in X.$ That is $\psi\equiv\gamma$. This completes the proof.
\end{proof}
\begin{corol}\label{cl1}Assume that hypotheses {\rm\textbf{(A1)}-\textbf{(A4)}} are satisfied. Suppose that there exist two functions $\varepsilon:W\times X\to\mathbb{R}_{+}$  and $\varphi: W\to X$ such that \eqref{f1} holds and
\begin{align}\label{f13}
(\Lambda\varepsilon)(x,y)=q\varepsilon(x,y), \;\;(x,y)\in W\times X,\;\;\;\;q\in [0,1).
\end{align}
Then the limit \eqref{f3} exists and the function $\psi:W\to X$ so defined is the unique fixed point of $\mathcal{T}$ with 
$$\left\|\varphi(x)-\psi(x),y\right\|\le \frac{K\varepsilon^{\theta}(x,y)}{1-q^{\theta}}, \;\;\;\;(x,y)\in W\times X,\;\;\;\;q\in [0,1)$$
for some $K>0,$ where $\theta=\beta\log_{2\kappa}2.$
\end{corol}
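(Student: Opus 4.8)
The plan is to obtain Corollary \ref{cl1} as a direct specialization of Theorem \ref{pl1}: I only need to check that the standing hypothesis \eqref{f13} forces both \eqref{f2} and \eqref{f04} to hold, compute $\varepsilon^{*}$ explicitly, and then read off \eqref{f4}. The first step is to show, by induction on $n$ and using that $\Lambda$ is linear together with \eqref{f13}, that
\[
(\Lambda^{n}\varepsilon)(x,y)=q^{n}\,\varepsilon(x,y),\qquad (x,y)\in W\times X,\ n\in\mathbb{N}_{0}.
\]
The case $n=0$ is trivial, and the inductive step is $(\Lambda^{n+1}\varepsilon)(x,y)=\bigl(\Lambda(\Lambda^{n}\varepsilon)\bigr)(x,y)=q^{n}(\Lambda\varepsilon)(x,y)=q^{n+1}\varepsilon(x,y)$, where the linearity of $\Lambda$ is used to pull out the scalar $q^{n}$.

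Since $q\in[0,1)$ and $\theta=\beta\log_{2\kappa}2\in(0,1]$, we have $q^{\theta}\in[0,1)$, so the geometric series converges and
\[
\varepsilon^{*}(x,y)=\sum_{n=0}^{\infty}(\Lambda^{n}\varepsilon)^{\theta}(x,y)=\varepsilon^{\theta}(x,y)\sum_{n=0}^{\infty}q^{n\theta}=\frac{\varepsilon^{\theta}(x,y)}{1-q^{\theta}}<\infty,
\]
which is precisely \eqref{f2}. For \eqref{f04} I would observe that $\sum_{n=0}^{\infty}(\Lambda^{n}\varepsilon)(x,y)=\varepsilon(x,y)/(1-q)$, and then take $M:=\dfrac{1-q}{(1-q^{\theta})^{1/\theta}}>0$, so that
\[
\Bigl(M\sum_{n=0}^{\infty}(\Lambda^{n}\varepsilon)(x,y)\Bigr)^{\theta}=\frac{M^{\theta}\varepsilon^{\theta}(x,y)}{(1-q)^{\theta}}=\frac{\varepsilon^{\theta}(x,y)}{1-q^{\theta}}=\varepsilon^{*}(x,y);
\]
hence \eqref{f04} holds (with equality). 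All hypotheses of Theorem \ref{pl1} being verified, it follows that the limit \eqref{f3} exists, that $\psi$ is the unique fixed point of $\mathcal{T}$ satisfying \eqref{f4}, and substituting $\varepsilon^{*}(x,y)=\varepsilon^{\theta}(x,y)/(1-q^{\theta})$ into \eqref{f4} gives $\left\|\varphi(x)-\psi(x),y\right\|_{q,\beta}^{\theta}\le K\varepsilon^{\theta}(x,y)/(1-q^{\theta})$, which is the claimed estimate with the same constant $K>0$.

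There is no genuine obstacle here; the computation is routine once the induction is in place. The only point deserving a moment's care is the choice of $M$ in \eqref{f04}: because $0<\theta\le 1$, the quantity $(1-q)^{\theta}$ need not dominate $1-q^{\theta}$, so one cannot simply take $M=1$, but the explicit value above is always admissible since $1-q^{\theta}>0$. One should also note the degenerate case $q=0$, in which $\Lambda^{n}\varepsilon\equiv 0$ for $n\ge 1$ and the sequence $\{\mathcal{T}^{n}\varphi\}$ stabilizes after a single step; this is covered by exactly the same formulas, with $\varepsilon^{*}(x,y)=\varepsilon^{\theta}(x,y)$.
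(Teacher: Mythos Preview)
Your proof is correct and follows essentially the same route as the paper's: deduce $(\Lambda^{n}\varepsilon)=q^{n}\varepsilon$, sum the resulting geometric series to verify \eqref{f2} and \eqref{f04}, and invoke Theorem~\ref{pl1}. In fact your treatment of \eqref{f04} is more careful than the paper's own argument, which takes $M=1$ via the inequality $\tfrac{1}{1-q^{\theta}}\le\tfrac{1}{(1-q)^{\theta}}$---an inequality that actually fails for $0<\theta<1$ (e.g.\ $q=\theta=\tfrac12$)---whereas your explicit choice $M=(1-q)/(1-q^{\theta})^{1/\theta}$ yields equality in \eqref{f04} and is always valid; only your parenthetical remark that ``$(1-q)^{\theta}$ need not dominate $1-q^{\theta}$'' is phrased backwards, since in fact $(1-q)^{\theta}\ge 1-q^{\theta}$ always holds and it is precisely this that prevents $M=1$ from working.
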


\begin{proof}It follows from \eqref{f13} that
$$(\Lambda^n\varepsilon)(x,y)=q^n\varepsilon(x,y), \;\;(x,y)\in W\times X,\;n\in \mathbb{N}_0.$$
Therefore, for every $(x,y)\in W\times X,$

$$\varepsilon^{*}(x,y)=\sum_{n=0}^{\infty} (\Lambda^{n}\varepsilon)^{\theta}(x,y)=\sum_{n=0}^{\infty} q^{n\theta}(\varepsilon(x,y))^{\theta}=\frac{\varepsilon^{\theta}(x,y)}{1-q^{\theta}}<\infty$$
where $\theta=\beta\log_{2\kappa}2.$ So, condition \eqref{f2} holds. Moreover
$$\left(\sum_{n=0}^{\infty} (\Lambda^{n}\varepsilon)(x,y)\right)^{\theta}=\left(\sum_{n=0}^{\infty} q^{n}\varepsilon(x,y)\right)^{\theta}=\frac{\varepsilon^{\theta}(x,y)}{(1-q)^{\theta}}<\infty$$
for all $(x,y)\in W\times X.$ As 
$$\frac{\varepsilon^{\theta}(x,y)}{1-q^{\theta}}\le\frac{\varepsilon^{\theta}(x,y)}{(1-q)^{\theta}},\quad\;\;(x,y)\in W\times X,$$
then condition \eqref{f04} holds and our assertion follows from Theorem \ref{pl1} and its proof.
\end{proof}
\begin{remark} (i) If $(X,\|\cdot,\cdot\|,\kappa)$ is a quasi-$2$-Banach in Theorem \ref{pl1}, then $\theta=\log_{2\kappa}2$ (i.e., $\beta=1$) and Theorem \ref{pl1}  remains true. 

	(ii) If $(X,\|\cdot,\cdot\|_{\beta})$ is a $(2,\beta)$-Banach in Theorem \ref{pl1}, then $\kappa=1$ and Theorem \ref{pl1}  remains true with $\||\cdot,\cdot|\|_{\beta}=\|\cdot,\cdot\|_{\beta}$.
	
	(iii) If $(X,\|\cdot,\cdot\|)$ is a $2$-Banach in Theorem \ref{pl1}, then $\kappa=1=\beta$ we obtain \cite[Theorem 1]{krs} with $\||\cdot,\cdot|\|=\|\cdot,\cdot\|$. Moreover, we have
	$$\left\|\varphi(x)-\psi(x),y\right\| \leq \varepsilon^{*}(x,y).
$$	
\end{remark}

\section{General Solution of Eq.~\eqref{eq0}}\label{sol}

In this section, we give  the general solution of the radical-type functional equation \eqref{eq0} by using some results that are reported in  \cite{ATNAA}.

\begin{prop} \label{p1}
Let $a,b,c,d\in\mathbb{R}\setminus\{0\}$ be fixed numbers and $\mathcal{V}$ a real vector space. A function $f : \mathbb{R} \to  \mathcal{V} $ satisfies  \eqref{eq0}  if and only if there exists a solution $g : \mathbb{R} \to  \mathcal{V}$ of the equation
\begin{align}\label{fssi}
g(ax +by)+g(ax-by) = cg(x) + dg(y),\quad x,y \in\mathbb{R},
\end{align}
such that  $f(x)=g\big(x^3\big)$ for all $x\in\mathbb{R}.$
\end{prop}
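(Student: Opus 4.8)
The plan is to use that the map $t\mapsto\sqrt[3]{t}$ is a bijection of $\mathbb{R}$ onto itself with inverse $x\mapsto x^{3}$; this is exactly what lets one translate between \eqref{eq0} and \eqref{fssi} via the substitution $g(t):=f(\sqrt[3]{t})$, equivalently $f(x)=g(x^{3})$.

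\textbf{Sufficiency.} Suppose $g:\mathbb{R}\to\mathcal{V}$ solves \eqref{fssi} and set $f(x):=g(x^{3})$. Since $\big(\sqrt[3]{ax^{3}\pm by^{3}}\big)^{3}=ax^{3}\pm by^{3}$ for all $x,y\in\mathbb{R}$, we get
\[
f\big(\sqrt[3]{ax^{3}+by^{3}}\big)+f\big(\sqrt[3]{ax^{3}-by^{3}}\big)=g(ax^{3}+by^{3})+g(ax^{3}-by^{3}),
\]
and applying \eqref{fssi} with $x,y$ replaced by $x^{3},y^{3}$ yields $cg(x^{3})+dg(y^{3})=cf(x)+df(y)$. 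Hence $f$ satisfies \eqref{eq0}.

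\textbf{Necessity.} Suppose $f:\mathbb{R}\to\mathcal{V}$ satisfies \eqref{eq0} and define $g:\mathbb{R}\to\mathcal{V}$ by $g(t):=f(\sqrt[3]{t})$, so that $g(x^{3})=f(x)$ for every $x\in\mathbb{R}$. Fix $u,v\in\mathbb{R}$ and put $x:=\sqrt[3]{u}$, $y:=\sqrt[3]{v}$, whence $u=x^{3}$ and $v=y^{3}$. Then
\[
g(au+bv)+g(au-bv)=g(ax^{3}+by^{3})+g(ax^{3}-by^{3})=f\big(\sqrt[3]{ax^{3}+by^{3}}\big)+f\big(\sqrt[3]{ax^{3}-by^{3}}\big),
\]
which by \eqref{eq0} equals $cf(x)+df(y)=cg(u)+dg(v)$. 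Thus $g$ solves \eqref{fssi}, and $f(x)=g(x^{3})$ by construction.

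The whole proof is a change of variables, so there is no real obstacle; the only point requiring care is that the cube root (an odd root) is a bijection of all of $\mathbb{R}$, which is precisely what makes the reduction to the unrestricted equation \eqref{fssi} clean --- unlike square-root analogues, where the domain needs extra attention. Once this reduction is in place, the explicit description of the solutions of \eqref{fssi} can be read off from the results of \cite{ATNAA}.
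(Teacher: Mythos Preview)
Your proof is correct and follows essentially the same approach as the paper: both directions are handled by the substitution $g(t)=f(\sqrt[3]{t})$ (equivalently $f(x)=g(x^{3})$), exploiting that $t\mapsto\sqrt[3]{t}$ is a bijection of $\mathbb{R}$. Your write-up is slightly more explicit in the sufficiency direction and in the change of variables $u=x^{3},\,v=y^{3}$, but the argument is the same.
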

\begin{proof}  It is clear  that if $f : \mathbb{R}\to  \mathcal{V} $ has form
$f(x)=g\big(x^3\big)$ for  $x\in\mathbb{R},$  with $g$ satisfies  \eqref{fssi}, then it is a solution of \eqref{eq0}.

On the other hand, if $f : \mathbb{R} \to  \mathcal{V} $ is a solution to \eqref{eq0}, and setting $h(x)=f\big(\sqrt[3]{x}\big)$ for all $x\in \mathbb{R}$, then from \eqref{eq0}, we get 
\begin{align*}
h\left(ax^3+by^3\right)&+h\left(ax^3-by^3\right)=f\left(\sqrt[3]{ax^3+by^3}\right)+f\left(\sqrt[3]{ax^-by^3}\right)\\
&=cf(x)+df(y)=ch\big(x^3\big)+dh\big(y^3\big),\quad x,y\in \mathbb{R},
\end{align*}
where $a,b,c,d\in\mathbb{R}\setminus\{0\}$. So, $h$ satisfies \eqref{fssi}.    Thus it suffices to take $g\equiv h$. 
\end{proof}

\begin{lemma} \label{lem1}
Let $\mathcal{V}$ be a real vector space and  $f : \mathbb{R} \to  \mathcal{V}$ be a function satisfying  \eqref{eq0} with $a,b,c,d\in\mathbb{R}\setminus\{0\}$ and $c+d\neq 2$. Then
\begin{itemize}
	\item [(i)] $f$ satisfies the functional equation 
	\begin{equation}\label{tic}
	f\left(\sqrt[3]{x^{3}+y^{3}}\right)+f\left(\sqrt[3]{x^{3}-y^{3}}\right)=2f(x)+2f(y),\,\quad x,y\in\mathbb{R}.
	\end{equation}
	\item [(ii)]$f$ is a sextic mapping and if $f$ is continuous, then $f(x)=x^6f(1)$ for all $x\in\mathbb{R}_+$.
\end{itemize}
\end{lemma}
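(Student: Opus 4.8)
The plan is to push everything through the classical quadratic functional equation by means of Proposition \ref{p1}. First I would apply Proposition \ref{p1} to write $f(x)=g(x^3)$ for all $x\in\mathbb{R}$, where $g(t):=f(\sqrt[3]{t})$ solves \eqref{fssi}, that is, $g(ax+by)+g(ax-by)=cg(x)+dg(y)$ for all $x,y\in\mathbb{R}$. Putting $x=y=0$ there gives $(c+d-2)g(0)=0$, hence $g(0)=0$ since $c+d\neq2$. Then $y=0$ yields $2g(ax)=cg(x)$, while $x=0$ yields $g(by)+g(-by)=dg(y)$; replacing $y$ by $-y$ in the last identity shows $dg(y)=dg(-y)$, so $g$ is even (as $d\neq0$), and therefore also $2g(by)=dg(y)$. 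Reading these two homogeneity relations backwards gives $g(x/a)=\tfrac{2}{c}g(x)$ and $g(x/b)=\tfrac{2}{d}g(x)$ for every $x\in\mathbb{R}$.

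Next I would substitute $x\mapsto x/a$, $y\mapsto y/b$ into \eqref{fssi}. The left side becomes $g(x+y)+g(x-y)$, and the right side becomes $cg(x/a)+dg(y/b)=c\cdot\tfrac{2}{c}g(x)+d\cdot\tfrac{2}{d}g(y)=2g(x)+2g(y)$; hence $g$ satisfies the quadratic functional equation $g(x+y)+g(x-y)=2g(x)+2g(y)$ on $\mathbb{R}$. Replacing $x,y$ by $x^3,y^3$ and using $f(x)=g(x^3)$ gives $f(\sqrt[3]{x^3+y^3})+f(\sqrt[3]{x^3-y^3})=g(x^3+y^3)+g(x^3-y^3)=2g(x^3)+2g(y^3)=2f(x)+2f(y)$, which is precisely \eqref{tic}; this proves (i).

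For (ii), the general solution of the quadratic equation satisfied by $g$ is, by the classical representation theorem (see the results of \cite{ATNAA}), $g(t)=B(t,t)$ for some symmetric biadditive $B:\mathbb{R}\times\mathbb{R}\to\mathcal{V}$; equivalently $f(x)=B(x^3,x^3)$, so $f$ is a sextic mapping. If moreover $f$ is continuous, then $g(t)=f(\sqrt[3]{t})$ is continuous, so $B(s,t)=\tfrac12\big(g(s+t)-g(s)-g(t)\big)$ is continuous, whence $B(s,t)=B(1,1)st$; this gives $g(t)=g(1)t^2$ and $f(x)=g(x^3)=g(1)x^6=f(1)x^6$, in particular for all $x\in\mathbb{R}_+$.

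The argument is essentially computational; the only places calling for a bit of care are the use of $c+d\neq2$ to get $g(0)=0$ and the evenness of $g$ (which together with $d\neq0$ unlocks the passage from \eqref{fssi} to the quadratic equation), and, for (ii), invoking the classical description of quadratic maps with values in a general real vector space and the step from continuity of $f$ to continuity of $g$ and of $B$. I expect the second of these --- correctly citing and applying the structural result behind ``sextic mapping'' --- to be the most delicate point, everything else being routine substitution.
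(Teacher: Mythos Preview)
Your argument is correct and follows essentially the same route as the paper. Both proofs establish $f(0)=0$, evenness, and the two homogeneity relations, then use a substitution to collapse \eqref{eq0}/\eqref{fssi} to the quadratic form; you work through $g$ via Proposition~\ref{p1} and substitute $(x/a,y/b)$, while the paper works directly with $f$ and substitutes $(\sqrt[3]{b}\,x,\sqrt[3]{a}\,y)$, using the combined relation $f(\sqrt[3]{ab}\,x)=\tfrac{cd}{4}f(x)$---these are the same manoeuvre in different coordinates. For (ii) the paper simply invokes \cite[Remark~1]{fass}, whereas you spell out the biadditive representation and the passage to $f(x)=x^6f(1)$ under continuity; your version is more self-contained, though the citation you give (\cite{ATNAA}) should really be to \cite{acz} or \cite{fass} for the quadratic/biadditive structure.
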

\begin{proof} (i) Suppose  that $f$ satisfies \eqref{eq0}, then by letting  $x= y = 0$ in \eqref{eq0}, we have  $f (0) = 0$ because $c+d \neq 2.$ Replacing $y$ by $-y$ in \eqref{eq0}, we get 
\begin{equation}\label{tii}
	f\left(\sqrt[3]{ax^{3}-by^{3}}\right)+f\left(\sqrt[3]{ax^{3}+by^{3}}\right)=cf(x)+df(-y),\,\quad x,y\in\mathbb{R}.
	\end{equation}
If we compare \eqref{eq0} with \eqref{tii}, we obtain that the function $f$ is even. 

Taking first $y=0$ and next $x=0$ in \eqref{eq0}, we get
$$f\big(\sqrt[3]{a}x\big)=\frac{c}{2}f(x),\quad\;\;\;\;f\big(\sqrt[3]{b}y\big)=\frac{d}{2}f(y).$$
  So, 
\begin{equation}\label{rc}
f\big(\sqrt[3]{ab}x\big)=\frac{cd}{4}f(x),\;\;x\in \mathbb{R}.
\end{equation}
 Replacing $(x,y)$ by $\big(\sqrt[3]{b}x,\sqrt[3]{a}y\big)$ in \eqref{eq0}, we obtain
\begin{equation}\label{eqgrc1}
f \left(\sqrt[3]{abx^3 + aby^3}\right)+f \left(\sqrt[3]{abx^3 - aby^3}\right)= cf \big(\sqrt[3]{b}x\big) + df \big(\sqrt[3]{a}y\big)\;\;x\in \mathbb{R}.
\end{equation}
 It follows from \eqref{rc} and \eqref{eqgrc1} that $f$ is a solution of \eqref{tic}.

(ii) As $f$  satisfies \eqref{tic}  and by \cite[Remark 1]{fass}, we get the desired results.
\end{proof}
\begin{theorem}\label{tt2} Let $a,b,c,d\in\mathbb{R}\setminus\{0\}$ be fixed numbers and $\mathcal{ V}$  a real vector space. A function $f : \mathbb{R} \to  \mathcal{V} $ satisfies  \eqref{eq0}  if and only if:
\begin{itemize}
	\item [(i)] In the case $c+d\neq 2,$  there exists a quadratic mapping $Q : \mathbb{R} \to  \mathcal{V}$  such that $f(x)=Q\big(x^3\big)$ for all $ x\in\mathbb{R}$ and
	\begin{equation}\label{tic+c}
	Q(ax)=\frac{c}{2}Q(x),\qquad Q(bx)=\frac{d}{2}Q(x),\qquad x\in\mathbb{R}.
	\end{equation}
	
	\item [(ii)]In the case $c+d= 2,$  there exist $w \in\mathcal{V}$ and a quadratic  mapping $Q : \mathbb{R} \to  \mathcal{V},$  such that \eqref{tic+c} holds and $f(x)=Q\big(x^3\big)+w$ for all $x\in\mathbb{R}.$
\end{itemize}
\end{theorem}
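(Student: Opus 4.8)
The plan is to reduce everything to Proposition \ref{p1}, which says a function solves \eqref{eq0} exactly when it is of the form $f(x)=g(x^3)$ for some solution $g$ of the ``polynomial'' quadratic-type equation \eqref{fssi}, and then to analyse \eqref{fssi} on its own.

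First I would dispose of the ``if'' direction by a plain substitution. If $f(x)=Q(x^3)+w$ with $Q$ quadratic (so $Q(u+v)+Q(u-v)=2Q(u)+2Q(v)$) and \eqref{tic+c} holds, then
$f(\sqrt[3]{ax^3+by^3})+f(\sqrt[3]{ax^3-by^3})=Q(ax^3+by^3)+Q(ax^3-by^3)+2w=2Q(ax^3)+2Q(by^3)+2w=cQ(x^3)+dQ(y^3)+2w$, while $cf(x)+df(y)=cQ(x^3)+dQ(y^3)+(c+d)w$; these agree when $c+d=2$ (case (ii)), and also when $w=0$ (case (i)).

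For the ``only if'' direction I would first invoke Proposition \ref{p1} to get $g:\mathbb{R}\to\mathcal{V}$ solving \eqref{fssi} with $f(x)=g(x^3)$. Put $w:=g(0)$ and $G:=g-w$; a one-line computation shows $G$ again solves \eqref{fssi} and $G(0)=0$ (and, in the subcase $c+d\neq2$, setting $x=y=0$ in \eqref{fssi} already forces $g(0)=0$, so $w=0$). Replacing $y$ by $-y$ in \eqref{fssi} for $G$ and using $d\neq0$ gives that $G$ is even. Putting $y=0$ gives $G(ax)=\tfrac{c}{2}G(x)$, and putting $x=0$ together with evenness gives $G(bx)=\tfrac{d}{2}G(x)$; hence \eqref{fssi} for $G$ rewrites as $G(ax+by)+G(ax-by)=2G(ax)+2G(by)$. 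Since $a,b\neq0$, the maps $x\mapsto ax$ and $y\mapsto by$ are onto $\mathbb{R}$, so substituting $u=ax$, $v=by$ yields $G(u+v)+G(u-v)=2G(u)+2G(v)$ for all $u,v\in\mathbb{R}$, i.e.\ $G$ is quadratic. Taking $Q:=G$, we get $f(x)=g(x^3)=Q(x^3)+w$ with \eqref{tic+c} holding by the two scaling relations just derived, and $w=0$ exactly when $c+d\neq2$, which splits into (i) and (ii). Alternatively, in case (i) one may skip the translation step and invoke Lemma \ref{lem1}(i): \eqref{tic} reads $g(x^3+y^3)+g(x^3-y^3)=2g(x^3)+2g(y^3)$, and bijectivity of cubing on $\mathbb{R}$ makes $g$ quadratic directly, while \eqref{tic+c} comes from the relations $f(\sqrt[3]{a}x)=\tfrac{c}{2}f(x)$, $f(\sqrt[3]{b}x)=\tfrac{d}{2}f(x)$ recorded inside the proof of Lemma \ref{lem1}.

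I do not expect a genuine obstacle here. The only points needing mild care are: checking that the shift $G=g-g(0)$ preserves \eqref{fssi} precisely when $c+d=2$; the surjectivity substitution $u=ax$, $v=by$ that upgrades the ``$a,b$-scaled'' identity to the honest parallelogram identity for $Q$; and, throughout, bookkeeping of the additive constant $w$ so that the hypothesis $c+d=2$ is used exactly where and only where it is needed.
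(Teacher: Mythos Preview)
Your argument is correct and in fact slightly more self-contained than the paper's. The paper proceeds differently: in case~(i) it invokes Lemma~\ref{lem1}(i) to obtain \eqref{tic} for $f$, and then quotes an external result, \cite[Theorem~2.1]{fass}, to conclude that $f(x)=Q(x^3)$ for a quadratic $Q$; in case~(ii) it shifts $f$ by $f(0)$ and repeats the same appeal to \cite{fass}. You instead pass through Proposition~\ref{p1} to the companion function $g$ and prove the quadraticity of $G=g-g(0)$ by hand, using the surjectivity substitution $u=ax$, $v=by$ to turn the scaled identity into the honest parallelogram law. This buys you independence from the cited paper and makes the argument entirely internal. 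One small wording point: saying ``$w=0$ exactly when $c+d\neq2$'' overstates it---you have shown $w=0$ \emph{whenever} $c+d\neq2$, while for $c+d=2$ the constant $w$ is unrestricted (and may still vanish); this does not affect the proof, but tighten the phrasing.
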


\begin{proof} (i) Assume that $f$ satisfies \eqref{eq0} and $c+d\neq 2$. Then by Lemma \ref{lem1}, we obtain that $f$ satisfies \eqref{tic}. Applying \cite[Theorem 2.1]{fass},  there exists a quadratic mapping $Q : \mathbb{R} \to  \mathcal{V}$ such that $f(x)=Q\big(x^3\big)$ for all $x\in\mathbb{R}$. Also, we have $f(0)=0$ and
$$f\big(\sqrt[3]{a}x\big)=\frac{c}{2}f(x),\;\;\;\;f\big(\sqrt[3]{b}x\big)=\frac{d}{2}f(x),\;\;x\in\mathbb{R}.$$
So,
$$Q(ax)=\frac{c}{2}Q(x)\;\text{and}\;\;Q(bx)=\frac{d}{2}Q(x),\quad x\in\mathbb{R}. $$

(ii) Suppose that $f$ satisfies \eqref{eq0} and $c+d= 2$. Putting
$$f_0(x):=f(x)-f(0),\quad x\in\mathbb{R}.$$
Then $f_0(0)=0$ and $f_0$ satisfies \eqref{eq0}. Similarly to the proof of Lemma \ref{lem1}, we get 
$$f_0\big(\sqrt[3]{a}x\big)=\frac{c}{2}f_0(x),\;\;\;\;f_0\big(\sqrt[3]{b}x\big)=\frac{d}{2}f_0(x),\;\;x\in\mathbb{R},$$
and $f_0$ satisfies \eqref{tic}, so by \cite[Theorem 2.1]{fass}, there exists a quadratic mapping $Q : \mathbb{R} \to  \mathcal{V}$ such that $f_0(x)=Q\big(x^3\big)$ and $Q$ satisfies \eqref{tic+c}. Hence $f(x)=Q\big(x^3\big)+w,$ with $w:=f(0).$

The converse is easy to check. This completes the proof.
\end{proof}
\begin{remark}\label{rq1} It is well known (see, e.g., \cite{acz}) that a mapping $Q : \mathbb{R} \to  \mathcal{V}$ is quadratic if and only there exists $B : \mathbb{R}^2 \to  \mathcal{V}$ that
is symmetric (i.e., $B(x, y)= B(y,x)$ for all $x, y \in\mathbb{R}$) and biadditive (i.e.,
$B(x+y,  z)= B(x, z)+ B(y, z)$ for all $x, y,z \in\mathbb{R}$) such that $Q(x)=B(x,x)$ for all $x \in\mathbb{R}$.
\end{remark} 

We derive from Proposition \ref{p1}, Theorem \ref{tt2} and Remark \ref{rq1} the following corollaries.

\begin{corol}\label{cc2} Let $a,b,c,d\in\mathbb{R}\setminus\{0\}$ be fixed numbers and $\mathcal{ V}$ a real vector space. A function $f : \mathbb{R} \to  \mathcal{V} $ satisfies  \eqref{eq0}  if and only if:
\begin{itemize}
	\item [(i)] In the case $c+d\neq 2,$  there is a
symmetric biadditive  mapping $B : \mathbb{R} \to  \mathcal{V}$  such that $f(x)=B\big(x^3,x^3\big)$ for all $ x\in\mathbb{R},$ and
	\begin{equation}\label{bi}
	B(ax,ay)=\frac{c}{2}B(x,y),\quad B(bx,by)=\frac{d}{2}B(x,y),\;\;x,y\in\mathbb{R}.
	\end{equation}
	\item [(ii)]In the case $c+d= 2,$  there are $w \in\mathcal{V}$ and a
symmetric biadditive  mapping $B : \mathbb{R} \to  \mathcal{V}$   such that \eqref{bi} holds and
		$f(x)=B\big(x^3,x^3\big)+w$ for all $x\in\mathbb{R}.$
\end{itemize}
\end{corol}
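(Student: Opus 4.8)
The plan is to read this off directly from Theorem~\ref{tt2} and Remark~\ref{rq1}; the only genuine content is translating the scaling conditions \eqref{tic+c} on the quadratic map $Q$ into the bilinear conditions \eqref{bi} on the associated symmetric biadditive form $B$.

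First I would handle case (i). Suppose $f$ satisfies \eqref{eq0} with $c+d\neq 2$. Theorem~\ref{tt2}(i) yields a quadratic $Q:\mathbb{R}\to\mathcal{V}$ with $f(x)=Q\big(x^3\big)$ and $Q(ax)=\tfrac{c}{2}Q(x)$, $Q(bx)=\tfrac{d}{2}Q(x)$ for all $x\in\mathbb{R}$. By Remark~\ref{rq1} there is a symmetric biadditive $B:\mathbb{R}^2\to\mathcal{V}$ with $Q(x)=B(x,x)$, so $f(x)=B\big(x^3,x^3\big)$ is immediate. To upgrade $Q(ax)=\tfrac{c}{2}Q(x)$ to $B(ax,ay)=\tfrac{c}{2}B(x,y)$ I would use the polarization identity $B(u,v)=\tfrac12\big(Q(u+v)-Q(u)-Q(v)\big)$, which holds for $Q$ and its unique symmetric biadditive companion: taking $u=ax$, $v=ay$ and using $Q\big(a(x+y)\big)=\tfrac{c}{2}Q(x+y)$ together with $Q(ax)=\tfrac{c}{2}Q(x)$ and $Q(ay)=\tfrac{c}{2}Q(y)$ gives $B(ax,ay)=\tfrac{c}{2}B(x,y)$; the computation for $b$ and $d$ is identical. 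Conversely, if $B$ is symmetric biadditive, satisfies \eqref{bi}, and $f(x)=B\big(x^3,x^3\big)$, set $Q(x):=B(x,x)$; then $Q$ is quadratic and evaluating \eqref{bi} at $y=x$ recovers \eqref{tic+c}, so the converse direction of Theorem~\ref{tt2}(i) shows $f$ satisfies \eqref{eq0}.

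Case (ii) is the same argument applied to $f_0(x):=f(x)-w$, with $w\in\mathcal{V}$ and $Q$ as furnished by Theorem~\ref{tt2}(ii): writing $Q(x)=B(x,x)$ and using polarization as above produces a symmetric biadditive $B$ satisfying \eqref{bi} with $f_0(x)=B\big(x^3,x^3\big)$, i.e.\ $f(x)=B\big(x^3,x^3\big)+w$; the converse again follows by setting $Q(x)=B(x,x)$ and invoking Theorem~\ref{tt2}(ii). I do not expect any real obstacle here: the sole step needing computation is the equivalence of the scaling conditions on $Q$ and on $B$, and that is a one-line consequence of polarization together with the homogeneity $Q\big(a(x+y)\big)=\tfrac{c}{2}Q(x+y)$ substituted into the identity---everything else is direct substitution into results already established.
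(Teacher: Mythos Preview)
Your proposal is correct and follows exactly the route the paper indicates: the paper does not spell out a proof but simply states that the corollary is derived from Theorem~\ref{tt2} and Remark~\ref{rq1}, and your argument does precisely that, with the polarization identity supplying the one computation needed to pass from the scaling conditions \eqref{tic+c} on $Q$ to the bilinear conditions \eqref{bi} on $B$.
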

\begin{corol}\label{cc3} Let $a,b,c,d\in\mathbb{R}\setminus\{0\}$ be fixed numbers and $\mathcal{ V}$ a real vector space. A function $g : \mathbb{R} \to  \mathcal{V} $ satisfies \eqref{fssi}  if and only if:
\begin{itemize}
	\item [(i)] In the case $c+d\neq 2,$  there is a
symmetric biadditive  mapping $B : \mathbb{R} \to  \mathcal{V}$  such that $g(x)=B\big(x,x\big)$ and
	\begin{equation}\label{bii}
B(ax,ay)=\frac{c}{2}B(x,y),\;\;\; B(bx,by)=\frac{d}{2}B(x,y),\;\;x,y\in\mathbb{R}.
	\end{equation}
	\item [(ii)]In the case $c+d= 2,$  there are $w \in\mathcal{V}$ and a
symmetric biadditive  mapping $B : \mathbb{R} \to  \mathcal{V}$   such that \eqref{bii} holds and
		$g(x)=B\big(x,x\big)+w$  for all $x\in\mathbb{R}.$
\end{itemize}
\end{corol}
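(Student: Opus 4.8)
The plan is to reduce Corollary~\ref{cc3} to the results already established for equation~\eqref{eq0}, rather than to repeat the fixed-point/stability machinery. The key observation is that Proposition~\ref{p1} sets up an exact correspondence between solutions $f:\mathbb{R}\to\mathcal{V}$ of \eqref{eq0} and solutions $g:\mathbb{R}\to\mathcal{V}$ of \eqref{fssi} via $f(x)=g(x^3)$; since $x\mapsto x^3$ is a bijection of $\mathbb{R}$, this correspondence is itself a bijection on the respective solution sets. Hence every statement about solutions of \eqref{eq0} transfers verbatim to a statement about solutions of \eqref{fssi}, simply by substituting $x^3\rightsquigarrow x$ throughout.

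Concretely, I would argue as follows. Suppose $g:\mathbb{R}\to\mathcal{V}$ satisfies \eqref{fssi}, and set $f(x):=g(x^3)$ for $x\in\mathbb{R}$. By Proposition~\ref{p1} (the easy direction), $f$ satisfies \eqref{eq0}. Now apply Corollary~\ref{cc2}: in the case $c+d\neq 2$ there is a symmetric biadditive $B:\mathbb{R}^2\to\mathcal{V}$ with $f(x)=B(x^3,x^3)$ for all $x\in\mathbb{R}$ and with \eqref{bi} holding, i.e. $B(ax,ay)=\tfrac{c}{2}B(x,y)$ and $B(bx,by)=\tfrac{d}{2}B(x,y)$; note \eqref{bi} and \eqref{bii} are literally the same two identities. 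Then for every $t\in\mathbb{R}$ write $t=s^3$ with $s=\sqrt[3]{t}$, so that $g(t)=g(s^3)=f(s)=B(s^3,s^3)=B(t,t)$, which is precisely conclusion (i). In the case $c+d=2$, Corollary~\ref{cc2}(ii) gives $w\in\mathcal{V}$ and a symmetric biadditive $B$ with $f(x)=B(x^3,x^3)+w$ and \eqref{bi}; the same substitution $t=s^3$ yields $g(t)=B(t,t)+w$, which is conclusion (ii). The converse directions are immediate: if $g(x)=B(x,x)$ (resp. $g(x)=B(x,x)+w$) with $B$ symmetric biadditive satisfying \eqref{bii}, then $Q(x):=B(x,x)$ is quadratic and satisfies \eqref{tic+c} by Remark~\ref{rq1}, so by Theorem~\ref{tt2} the function $f(x):=g(x^3)=Q(x^3)$ (resp. $Q(x^3)+w$) satisfies \eqref{eq0}, and then Proposition~\ref{p1} (or a direct check) shows $g$ satisfies \eqref{fssi}.

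Alternatively — and perhaps cleaner to write — one can bypass \eqref{eq0} entirely and mimic the proof of Theorem~\ref{tt2} directly for \eqref{fssi}: the change of variables used inside Lemma~\ref{lem1} (replacing $(x,y)$ by $(\sqrt[3]{b}x,\sqrt[3]{a}y)$, taking $y=0$ then $x=0$) has an exact analogue for \eqref{fssi} (replace $(x,y)$ by $(bx,ay)$, etc.), leading to $g(ax)=\tfrac{c}{2}g(x)$, $g(bx)=\tfrac{d}{2}g(x)$, evenness of $g_0:=g-g(0)$, and the reduced equation $g_0(x+y)+g_0(x-y)=2g_0(x)+2g_0(y)$, whence $g_0$ is quadratic by the classical Jordan--von~Neumann theorem; then Remark~\ref{rq1} produces $B$. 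I would present the first (transfer) argument as the main proof since all the work is already done, and mention the second only if a referee wants a self-contained version.

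I do not expect any genuine obstacle here: the only point needing a word of care is that the passage $t=s^3$ is legitimate for \emph{all} real $t$ precisely because the real cube root is a bijection on $\mathbb{R}$ (this is exactly why the statement works over $\mathbb{R}$ and the degree-$3$ radical, with no sign or branch issues, unlike the square-root radical equations of Khodaei et al.\ where the absolute value had to be inserted). One should also double-check that \eqref{bi} and \eqref{bii} really are the identical pair of conditions so that no translation of constants is needed — they are. Everything else is bookkeeping.
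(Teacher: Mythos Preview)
Your proposal is correct and matches the paper's intent: the paper gives no explicit proof of Corollary~\ref{cc3}, merely stating that it is derived from Proposition~\ref{p1}, Theorem~\ref{tt2} and Remark~\ref{rq1}, and your transfer argument via the bijection $x\mapsto x^3$ is precisely the way one unpacks that derivation. The only cosmetic difference is that you route through Corollary~\ref{cc2} rather than Theorem~\ref{tt2} directly, but since Corollary~\ref{cc2} is itself just Theorem~\ref{tt2} combined with Remark~\ref{rq1}, this is the same argument.
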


\section{ Hyperstability criterion  of Eq.~\eqref{eq000} and some consequences}\label{sol+}

By using  Theorem \ref{pl1}, we study the hyperstability results of Eq. \eqref{eq000} in quasi-$(2,\beta)$-Banach space. In the following, we consider   $(X,\|\cdot,\cdot\|_{q,\beta},\kappa)$ is  a  quasi-$(2,\beta)$-Banach space, $a,b,c,d\in\mathbb{R}\setminus\{0\}$ are fixed numbers, $\theta=\beta\log_{2\kappa}2$ and $\mathbb{N}_{m_{0}}:=\{m\in\mathbb{N}: m\ge m_0\}$ with $m_{0}\in \mathbb{N}$. A function $f : \mathbb{R}\to X$ fulfilling Eq. \eqref{eq000} $\gamma$-approximately if 
\begin{align}\label{ezd}
\Big\|f\left(\sqrt[3]{ax^{3}+by^{3}}\right)+f\left(\sqrt[3]{ax^{3}-by^{3}}\right)-cf(x)-df(y),z\Big\|_{q,\beta} \leq \gamma(x,y,z)
\end{align}
 for all $(x,y,z)\in \mathbb{R}_0^2\times X$, where $\sqrt[3]{a}x\neq \pm\sqrt[3]{b}y$ and  $\gamma : \mathbb{R}^2\times X\to \mathbb{R}_+$ is a given function.

\begin{theorem} \label{plh2}Let $ h_i: \mathbb{R}\times X \to \mathbb{R}_+$ be a given function for $i\in\{1,2,3,4\}$, and let
\begin{align}\label{eq+hm}
\mathcal{M}_0:=\Big\{n\in\mathbb{N}_2\mid P_n:=\max\{A_n,B_n,C_n\}<1\Big\}\neq\emptyset,
\end{align}
where 
$$A_n=\kappa|c|^{\beta}s_1(u_n^3)s_2(u_n^3)+\kappa^2|d|^{\beta}s_1(v_n^3)s_2(v_n^3)+\kappa^2s_1(w_n^3)s_2(w_n^3)$$
$$B_n=\kappa|c|^{\beta}s_3(u_n^3)+\kappa^2|d|^{\beta}s_3(v_n^3)+\kappa^2s_3(w_n^3)$$
$$C_n=\kappa|c|^{\beta}s_4(u_n^3)+\kappa^2|d|^{\beta}s_4(v_n^3)+\kappa^2s_4(w_n^3)$$
$$u_n=\frac{n}{\sqrt[3]{a}},\;\; v_n=\sqrt[3]{\frac{1-n^3}{b}},\;\;w_n=\sqrt[3]{2n^3-1}$$
$$\lim_{n\to\infty}\max\{s_1(u_n^3)s_2(v_n^3),s_3(u_n^3),s_4(v_n^3)\}=0$$
 and $s_i(\rho):=\inf\big\{t\in\mathbb{R}_+: h_i\big( \rho x^{3},z\big)\le th_i\big(x^{3},z\big) \;\;\text{for all}\;\; (x,z) \in\mathbb{R}\times X\big\}$ for  $\rho\in\mathbb{R}_0$ and $i\in\{1,2,3,4\}$, such that one of the conditions holds:
\begin{enumerate}
	\item [\rm{(i)}]$\lim_{ |\rho|\to+\infty}s_1( \rho)s_2( \rho)=0$ if $P_n=A_n,$
	\item [\rm{(ii)}]$\lim_{ |\rho|\to+\infty}s_3(\rho)=0$ if $P_n=B_n,$
	\item [\rm{(iii)}]$\lim_{|\rho|\to+\infty}s_4( \rho)=0$ if $P_n=C_n$.
\end{enumerate}
 If  $f : \mathbb{R}\to X$ satisfies \eqref{ezd} with $\gamma(x,y,z)=h_1\big(x^3,z\big)h_2\big(y^3,z\big)+h_3\big(x^3,z\big)+h_4\big(y^3,z\big)$, then \eqref{eq000} holds.
\end{theorem}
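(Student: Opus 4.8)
The strategy is the standard Brzdęk-type hyperstability argument, now carried out in the quasi-$(2,\beta)$-Banach setting via Theorem \ref{pl1}. Fix $n\in\mathcal{M}_0$. In \eqref{ezd} substitute the pair $(x,y)$ by $\bigl(u_n x,\; \sqrt[3]{\tfrac{1-n^3}{b}}\,x\bigr)$ (so that, writing $u_n=n/\sqrt[3]{a}$ and $v_n=\sqrt[3]{(1-n^3)/b}$, one has $\sqrt[3]{a\,u_n^3+b\,v_n^3}=x$ and $\sqrt[3]{a\,u_n^3-b\,v_n^3}=\sqrt[3]{2n^3-1}\,x=w_n x$). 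This rewrites the left side of \eqref{ezd} as $f(x)+f(w_n x)-c\,f(u_n x)-d\,f(v_n x)$, and motivates defining the operator $\mathcal{T}_n:X^{\mathbb{R}}\to X^{\mathbb{R}}$ by
\[
(\mathcal{T}_n\xi)(x):=c\,\xi(u_n x)+d\,\xi(v_n x)-\xi(w_n x),\qquad x\in\mathbb{R}.
\]
Then \eqref{ezd} says exactly $\|(\mathcal{T}_n f)(x)-f(x),z\|_{q,\beta}\le \varepsilon_n(x,z)$, where $\varepsilon_n(x,z):=\gamma(u_n x, v_n x, z)=h_1(u_n^3x^3,z)h_2(v_n^3x^3,z)+h_3(u_n^3x^3,z)+h_4(v_n^3x^3,z)$. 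Using the definition of $s_i$, one bounds $\varepsilon_n(x,z)\le s_1(u_n^3)s_2(v_n^3)\,h_1(x^3,z)h_2(x^3,z)+s_3(u_n^3)\,h_3(x^3,z)+s_4(v_n^3)\,h_4(x^3,z)$ — this is finite and, crucially, the three coefficients tend to $0$ as $n\to\infty$ by the limit hypothesis.

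Next I verify hypotheses (A1)–(A4) with $W=\mathbb{R}$, $j=3$, $f_1(x)=u_n x$, $f_2(x)=v_n x$, $f_3(x)=w_n x$, and $L_1(x,z)=\kappa|c|^\beta$, $L_2(x,z)=\kappa^2|d|^\beta$, $L_3(x,z)=\kappa^2$. The operator inequality (A3) follows from (B3) and two applications of the quasi-triangle inequality (B4): $\|(\mathcal{T}_n\xi)(x)-(\mathcal{T}_n\mu)(x),z\|_{q,\beta}\le\kappa|c|^\beta\|\xi(u_nx)-\mu(u_nx),z\|_{q,\beta}+\kappa^2|d|^\beta\|\xi(v_nx)-\mu(v_nx),z\|_{q,\beta}+\kappa^2\|\xi(w_nx)-\mu(w_nx),z\|_{q,\beta}$ (the asymmetry $\kappa$ vs.\ $\kappa^2$ comes from grouping three terms as $a+(b+c)$). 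The associated linear operator $\Lambda=\Lambda_n$ from (A4) then acts on $\delta\in\mathbb{R}_+^{\mathbb{R}\times X}$ by $(\Lambda_n\delta)(x,z)=\kappa|c|^\beta\delta(u_nx,z)+\kappa^2|d|^\beta\delta(v_nx,z)+\kappa^2\delta(w_nx,z)$. The key computation is that $\Lambda_n$ maps each of the three "building-block" functions $h_1(x^3,z)h_2(x^3,z)$, $h_3(x^3,z)$, $h_4(x^3,z)$ into a multiple of the sum of building blocks, with the scalar on the $i$-th block bounded by $A_n$, $B_n$, $C_n$ respectively: e.g.\ $\Lambda_n\bigl(h_1(x^3,z)h_2(x^3,z)\bigr)\le\bigl(\kappa|c|^\beta s_1(u_n^3)s_2(u_n^3)+\kappa^2|d|^\beta s_1(v_n^3)s_2(v_n^3)+\kappa^2 s_1(w_n^3)s_2(w_n^3)\bigr)h_1(x^3,z)h_2(x^3,z)=A_n\,h_1(x^3,z)h_2(x^3,z)$, and similarly for the other two. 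Hence, with $P_n=\max\{A_n,B_n,C_n\}<1$, one gets by induction $(\Lambda_n^k\varepsilon_n)(x,z)\le c_n P_n^{\,k}\bigl(h_1 h_2 + h_3 + h_4\bigr)(x^3,z)$ for a constant $c_n$ (involving $s_1(u_n^3)s_2(v_n^3)$, $s_3(u_n^3)$, $s_4(v_n^3)$), so $\varepsilon_n^*(x,z)=\sum_{k\ge0}(\Lambda_n^k\varepsilon_n)^\theta(x,z)\le c_n^\theta\,\dfrac{1}{1-P_n^{\theta}}\bigl(h_1h_2+h_3+h_4\bigr)^\theta(x^3,z)<\infty$, so \eqref{f2} holds. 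Theorem \ref{pl1} (or Corollary \ref{cl1} after a minor majorization, since $\Lambda_n$ need not act as pure scalar multiplication — one keeps the geometric-series bound by hand) now yields a function $\psi_n:\mathbb{R}\to X$ with $\mathcal{T}_n\psi_n=\psi_n$ and
\[
\|f(x)-\psi_n(x),z\|_{q,\beta}^{\theta}\le K\,c_n^{\theta}\,\frac{1}{1-P_n^{\theta}}\,\bigl(h_1(x^3,z)h_2(x^3,z)+h_3(x^3,z)+h_4(x^3,z)\bigr)^{\theta}
\]
for all $(x,z)\in\mathbb{R}_0\times X$.

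**Identifying $\psi_n$ with $f$ and concluding.** The first decisive point is that $\psi_n$ actually satisfies Eq.\ \eqref{eq000}: one shows by induction, using (A3)-type estimates and the same $\Lambda_n$-contraction, that for all $(x,y,z)\in\mathbb{R}_0^2\times X$ (with $\sqrt[3]{a}x\ne\pm\sqrt[3]{b}y$),
\[
\Bigl\|\mathcal{T}_n^k f\bigl(\sqrt[3]{ax^3+by^3}\bigr)+\mathcal{T}_n^k f\bigl(\sqrt[3]{ax^3-by^3}\bigr)-c\,\mathcal{T}_n^k f(x)-d\,\mathcal{T}_n^k f(y),z\Bigr\|_{q,\beta}\le P_n^{\,k}\,\widetilde\gamma(x,y,z)
\]
for a suitable $\widetilde\gamma$ built from the $h_i$'s; letting $k\to\infty$ and using that $\|\cdot,\cdot\|_{q,\beta}$-convergence of $\mathcal{T}_n^k f(\cdot)$ to $\psi_n(\cdot)$ is pointwise, together with the quasi-triangle inequality to split the four terms, gives that $\psi_n$ satisfies \eqref{eq000}. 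The second decisive point is to remove the dependence on $n$: the displayed bound shows $\|f(x)-\psi_n(x),z\|_{q,\beta}^{\theta}\le K c_n^\theta(1-P_n^\theta)^{-1}(\cdots)$, and $c_n\to0$ as $n\to\infty$ along $\mathcal{M}_0$ — this is exactly where one of the hypotheses (i)/(ii)/(iii) is used, to guarantee that the "dominant" coefficient among $s_1(u_n^3)s_2(v_n^3)$, $s_3(u_n^3)$, $s_4(v_n^3)$ decays while $P_n$ stays uniformly away from $1$ in the relevant regime (the limit $\lim_{|\rho|\to\infty}s_1(\rho)s_2(\rho)=0$ etc.\ also controls the long-run behaviour along the $f_i$-orbits). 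Hence for every fixed $(x,z)$ with $x\ne0$, $\psi_n(x)\to f(x)$ in the $\|\cdot,z\|_{q,\beta}$-seminorm as $n\to\infty$; since each $\psi_n$ satisfies \eqref{eq000} and the left-hand side of \eqref{eq000} is a finite $\mathbb{K}$-linear combination of evaluations of $\psi_n$, passing to the limit (using (B3) and (B4) to commute the limit through the four terms, and Lemma \ref{l1} to upgrade "zero for all $z$" to "zero in $X$") shows that $f$ itself satisfies \eqref{eq000}.

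**Main obstacle.** The routine part is the induction \eqref{f5}-style estimates and the bookkeeping of the $\kappa$/$\kappa^2$ factors; the genuinely delicate step is the last one — showing $\psi_n\to f$ — because in a quasi-$(2,\beta)$-space the seminorms $x\mapsto\|x,z\|_{q,\beta}$ need not be continuous (Remark after Definition \ref{d4}), so one must be careful to carry out the limit passage using only the $p$-homogeneous equivalent norm $\|\!|\cdot,\cdot|\!\|_{q,\beta}$ of Theorem \ref{tpp} (which is continuous), and to check that the constant $K$ and the constants $C_1,C_2$ from \eqref{eq} do not blow up with $n$ (they don't — $\kappa$, hence $C_1,C_2$, is fixed, so $K$ is fixed, and only $c_n\to0$ drives the estimate). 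A secondary subtlety is justifying that $\psi_n$ inherits \eqref{eq000} only on the restricted domain $\sqrt[3]{a}x\ne\pm\sqrt[3]{b}y$ and that this restriction is preserved under the substitutions defining $\mathcal{T}_n$; this is handled by noting $\mathcal{M}_0\subseteq\mathbb{N}_2$, so $u_n,v_n,w_n\ne0$ and the orbits stay inside $\mathbb{R}_0$.
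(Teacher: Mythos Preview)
Your proposal is correct and follows essentially the same route as the paper: the same substitution $(x,y)\mapsto(u_mx,v_mx)$, the same operator $\mathcal{T}_m$ with Lipschitz constants $\kappa|c|^\beta,\kappa^2|d|^\beta,\kappa^2$, the inductive bound $\Lambda_m^k\varepsilon_m\le \sigma_m P_m^k(h_1h_2+h_3+h_4)$ (your $c_n$ is the paper's $\sigma_m=\max\{s_1(u_m^3)s_2(v_m^3),s_3(u_m^3),s_4(v_m^3)\}$), application of Theorem~\ref{pl1}, the induction showing $\mathcal{T}_m^kf$ is a $P_m^k$-approximate solution, and finally $m\to\infty$ using $\sigma_m\to0$ together with the continuous equivalent quasi-norm $\||\cdot,\cdot|\|_{q,\beta}$ from Theorem~\ref{tpp}. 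The paper additionally inserts a uniqueness argument for the fixed point $Q_m$, but this is not needed for the hyperstability conclusion and your omission of it is harmless; one small clarification is that conditions (i)/(ii)/(iii) are used not so much to make $c_n\to0$ (that is the separate hypothesis $\lim_n\sigma_n=0$) as to ensure $\mathcal{M}_0$ is cofinal in $\mathbb{N}$ so that the passage $m\to\infty$ along $\mathcal{M}_0$ is meaningful.
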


\begin{proof}   It is clear that for  $\rho\in\mathbb{R}_0$ and $i=\{1,2,3,4\}$, we have
\begin{align}\label{ineq} 
h_i\big( \rho x^{3},z\big)\le s_i(\rho)h_i\big(x^{3},z\big),\quad (x,z) \in\mathbb{R}\times X.
\end{align}

 Replacing $(x,y)$ by $\left(u_m x,v_m x\right)$ in \eqref{ezd}, with $u_m=\frac{m}{\sqrt[3]{a}}$ and $v_m= \sqrt[3]{\frac{1-m^3}{b}}$,  we get  
\begin{align}\label{eq4h}
\|cf(u_mx)+df(v_mx)-f(w_mx)-f(x),z\|_{q,\beta}\leq& h_1\big(u_m^3x^3,z\big)h_2\big(v_m^3x^3,z\big)\nonumber\\&+h_3\big(u_m^3x^3,z\big)+h_4\big(v_m^3x^3,z\big)
\end{align}
for all  $x\in \mathbb{R}_0$ and all $z\in X$ with $w_m=\sqrt[3]{2m^3-1}$ and $m\in\mathbb{N}_2$ .   For each $m\in\mathbb{N}_2$, we will define  an operator
$\mathcal{T}_m: X^{\mathbb{R}_0}\to X^{\mathbb{R}_0}$  by
$$(\mathcal{T}_m\xi)(x):= c\xi(u_mx)+d\xi(v_mx)-\xi(w_mx),\quad \xi\in X^{\mathbb{R}_0},\;\;x\in \mathbb{R}_0.$$
Putting $$\varepsilon_ m(x,z) :=h_1\big(u_m^3x^3,z\big)h_2\big(v_m^3x^3,z\big)\nonumber+h_3\big(u_m^3x^3,z\big)+h_4\big(v_m^3x^3,z\big)$$ for all  $x\in \mathbb{R}_0$ and $z\in X$ with $m\in\mathbb{N}_2$. Then by \eqref{ineq}  we have
\begin{align}\label{eqeh}
\varepsilon_ m(x,z)\le \sigma_m \big[h_1\big(x^3,z\big)h_2\big(x^3,z\big)+h_3\big(x^3,z\big)+h_4\big(x^3,z\big)\big],\quad\;\;(x,z)\in \mathbb{R}_0\times X,
\end{align}
with $\sigma_m:=\max\{s_1(u_m^3)s_2(v_m^3),s_3(u_m^3),s_4(v_m^3)\}.$
 Then the inequality (\ref{eq4h}) takes the form
$$\|(\mathcal{T}_mf)(x)-f(x),z\|_{q,\beta}\leq\varepsilon_m(x,z), \quad  (x,z)\in \mathbb{R}_0\times X,\;\;m\in\mathbb{N}_2.$$

For each $m\in\mathbb{N}_2$, we find that the operator $\Lambda_m:\mathbb{R}_+^{\mathbb{R}_0\times X}\to \mathbb{R}_+^{\mathbb{R}_0}\times X$ defined by
$$(\Lambda_m\delta)(x,z)=\kappa |c|^{\beta}\delta\left(u_mx,z\right)+\kappa^2|d|^{\beta}\delta(v_mx,z)+\kappa^2\delta(w_mx,z),\quad\delta\in \mathbb{R}_+^{\mathbb{R}_0\times X},\;\;x\in \mathbb{R}_0$$
has the shape given in (\textbf{A4}) with $j=3$, 
$$f_{1}(x)\equiv u_mx,\quad f_{2}(x)\equiv v_mx,\quad f_{3}(x)\equiv w_mx, \quad L_{1}(x,z)\equiv\kappa |c|^{\beta},$$$$ \quad L_{2}(x,z)\equiv \kappa^2|d|^{\beta},\quad L_{3}(x,z)\equiv \kappa^2;\;\;(x,z)\in\mathbb{R}_0\times X,\;\;m\in\mathbb{N}_2.$$ 
  Moreover, for every $\xi, \mu\in X^{\mathbb{R}_0}$, $m\in\mathbb{N}_{2}$  and $(x,z)\in \mathbb{R}_0\times X$, we obtain
\begin{align*}
\big\|(\mathcal{T}_m\xi)(x)&-(\mathcal{T}_m\mu)(x),z\big\|_{q,\beta}\\&=
\left\|c\xi(u_mx)+d\xi(v_mx)-\xi(w_mx) -c\mu(u_mx)-d\mu(v_mx)+\mu(w_mx),z\right\|_{q,\beta}
\\&\le
  \kappa |c|^{\beta}\left\|\xi(f_{1}(x))-\mu(f_{1}(x)),z\right\|_{q,\beta}+\kappa^2 |d|^{\beta}\left\|\xi(f_{2}(x))-\mu(f_{2}(x)),z\right\|_{q,\beta}\\&+\kappa^2 \left\|\xi(f_{3}(x))-\mu(f_{3}(x)),z\right\|_{q,\beta}
\\&= \sum_{i=1}^{3}L_i(x,z)\big\|\xi(f_{i}(x))-\mu(f_{i}(x)),z\big\|_{q,\beta}.
\end{align*}
So, \textbf{(A3)} is valid  for $\mathcal{T}_m$ with $m\in\mathbb{N}_2$.  It is not hard to show that
\begin{align}\label{chk}
\Lambda_m\varepsilon_{m}(x,z)\le P_m \sigma_m \big[h_1\big(x^3,z\big)h_2\big(x^3,z\big)+h_3\big(x^3,z\big)+h_4\big(x^3,z\big)\big]
\end{align}
for all $(x,z)\in \mathbb{R}_0\times X.$ By  induction, we will show that for each $n\in \mathbb{N}_0$ and $(x,z)\in \mathbb{R}_0\times X,$   
\begin{align}\label{mlh}
\Lambda_m^{n}\varepsilon_{m}(x,z)\le P_m^n \sigma_m \big[h_1\big(x^3,z\big)h_2\big(x^3,z\big)+h_3\big(x^3,z\big)+h_4\big(x^3,z\big)\big]
\end{align}
where $m\in\mathcal{M}_0$. From (\ref{eqeh}), we obtain that the inequality  (\ref{mlh}) holds for $n = 0.$ Next, we will assume that (\ref{mlh}) holds for $n = r$, where $r \in \mathbb{N}_0$. Then,
\begin{align*}
\Lambda_m^{r+1}&\varepsilon_{m}(x,z)=\Lambda_m(\Lambda_m^{r}\varepsilon_{m}(x,z))
\\=&\;\kappa |c|^{\beta}\Lambda_m^{r}\varepsilon_{m}\left(u_mx,z\right)+\kappa^2|d|^{\beta}\Lambda_m^{r}\varepsilon_{m}(v_mx,z)+\kappa^2\Lambda_m^{r}\varepsilon_{m}(w_mx,z)
\\&\le P_m^r \sigma_m \kappa |c|^{\beta}\big[h_1\big(u_m^3x^3,z\big)h_2\big(u_m^3x^3,z\big)+h_3\big(u_m^3x^3,z\big)+h_4\big(u_m^3x^3,z\big)\big]
\\&+P_m^r \sigma_m \kappa^2 |d|^{\beta}\big[h_1\big(v_m^3x^3,z\big)h_2\big(v_m^3x^3,z\big)+h_3\big(v_m^3x^3,z\big)+h_4\big(v_m^3x^3,z\big)\big] 
\\&+P_m^r \sigma_m \kappa^2 \big[h_1\big(w_m^3x^3,z\big)h_2\big(w_m^3x^3,z\big)+h_3\big(w_m^3x^3,z\big)+h_4\big(w_m^3x^3,z\big)\big]
\\&\le P_m^r \sigma_m \big[ A_m h_1\big(x^3,z\big)h_2\big(x^3,z\big)+B_m h_3\big(x^3,z\big)+C_m h_4\big(x^3,z\big)\big]
\\&\le P_m^r \sigma_m\max\{A_m,B_m,C_m\}\big[h_1\big(x^3,z\big)h_2\big(x^3,z\big)+h_3\big(x^3,z\big)+h_4\big(x^3,z\big)\big]
\\&=P_m^{r+1} \sigma_m\big[h_1\big(x^3,z\big)h_2\big(x^3,z\big)+h_3\big(x^3,z\big)+h_4\big(x^3,z\big)\big]
\end{align*}
for all $(x,z)\in \mathbb{R}_0\times X$ and $m\in\mathcal{M}_0$. This shows that (\ref{mlh}) holds for all $n\in \mathbb{N}_0$. 

 By  the definition of $\mathcal{M}_0$, we find that for each $(x,z)\in \mathbb{R}_0\times X$ and $m\in\mathcal{M}_0$,
\begin{align*}
\varepsilon^{*}_m(x,z)=\sum_{n=0}^{\infty}(\Lambda_m^{n}\varepsilon_{m})^{\theta}(x,z)
\le& \sum_{n=0}^{\infty}P_m^{n\theta} \sigma_m^{\theta} \big[h_1\big(x^3,z\big)h_2\big(x^3,z\big)+h_3\big(x^3,z\big)+h_4\big(x^3,z\big)\big]^{\theta}
\\=& \frac{\sigma_m^{\theta} \big[h_1\big(x^3,z\big)h_2\big(x^3,z\big)+h_3\big(x^3,z\big)+h_4\big(x^3,z\big)\big]^{\theta}}{1-P_m^{\theta }}.
\end{align*}
Thus, according to Theorem \ref{pl1}, there exists a fixed point $ Q_{m}: \mathbb{R}_0\to X$ of the operator $\mathcal{T}_m$ satisfying
\begin{align}\label{theta}
\big\|f(x)-Q_{m}(x),z\big\|_{q,\beta}^{\theta}&\le K\varepsilon^{*}_m(x,z)\nonumber\\&\leq \frac{K\sigma_m^{\theta} \big[h_1\big(x^3,z\big)h_2\big(x^3,z\big)+h_3\big(x^3,z\big)+h_4\big(x^3,z\big)\big]^{\theta}}{1-P_m^{\theta }}
\end{align}
for all $(x,z)\in \mathbb{R}_0\times X,$ $m\in\mathcal{M}_0$ and for some constant $K>0$. That is,
$$ Q_{m}(x)= cQ_{m}(u_mx)+dQ_{m}(v_mx)-Q_{m}(w_mx),\quad x\in \mathbb{R}_0,\;\;m\in\mathcal{M}_0,$$ 
and  \eqref{theta} holds for all $(x,z)\in \mathbb{R}_0\times X$. Moreover;
\begin{align}\label{limit}
Q_m(x):=\lim_{n\to\infty}\mathcal{T}_m^{n}f(x).
\end{align}

Now, we show that for every $n \in\mathbb{N}_0$ and $(x, y,z)\in \mathbb{R}_0^2\times X$ with $\sqrt[3]{a}x\neq \pm\sqrt[3]{b}y$,
\begin{align}\label{eq5h} 
\Big\|\mathcal{T}_m^{n}f\left(\sqrt[3]{ax^3+by^3}\right)&+\mathcal{T}_m^{n}f\left(\sqrt[3]{ax^3-by^3}\right)-c\mathcal{T}_m^{n}f(x)-d\mathcal{T}_m^{n}f(y),z\Big\|_{q,\beta}\\&\leq P_m^{n} \big[h_1\big(x^3,z\big)h_2\big(y^3,z\big)+h_3\big(x^3,z\big)+h_4\big(y^3,z\big)\big]\nonumber
\end{align} 
where $m \in\mathcal{M}_0$ and $\gamma(x,y,z):=\big[h_1\big(x^3,z\big)h_2\big(y^3,z\big)+h_3\big(x^3,z\big)+h_4\big(y^3,z\big)\big]$.

Indeed, if $n =0$, then (\ref{eq5h}) is simply (\ref{ezd}). So, fix $n\in\mathbb{N}_0$ and assume that (\ref{eq5h}) holds for $n$ and every  $(x, y,z) \in \mathbb{R}_0^2\times X$ with $\sqrt[3]{a}x\neq \pm\sqrt[3]{b}y$. Then, by the definition of $\mathcal{T}_m$ and \eqref{ineq}, we get that
\begin{align*}
\Big\|&\mathcal{T}_m^{n+1}f\left(\sqrt[3]{ax^3+by^3}\right)+\mathcal{T}_m^{n+1}f\left(\sqrt[3]{ax^3-by^3}\right)-c\mathcal{T}_m^{n+1}f(x)-d\mathcal{T}_m^{n+1}f(y),z\Big\|_{q,\beta}
\\&=\;\Big\|c\mathcal{T}_m^{n}f\left(u_m\sqrt[3]{ax^3+by^3}\right)+d\mathcal{T}_m^{n}f\left(v_m\sqrt[3]{ax^3+by^3}\right)-\mathcal{T}_m^{n}f\left(w_m\sqrt[3]{ax^3+by^3}\right)\\&+
c\mathcal{T}_m^{n}f\left(u_m\sqrt[3]{ax^3-by^3}\right)+d\mathcal{T}_m^{n}f\left(v_m\sqrt[3]{ax^3-by^3}\right)-\mathcal{T}_m^{n}f\left(w_m\sqrt[3]{ax^3-by^3}\right)
\\&-c\big[c\mathcal{T}_m^{n}f\left(u_mx\right)+d\mathcal{T}_m^{n}f\left(v_mx\right)-\mathcal{T}_m^{n}f\left(w_mx\right)\big]
-d\big[c\mathcal{T}_m^{n}f\left(u_my\right)+d\mathcal{T}_m^{n}f\left(v_my\right)-\mathcal{T}_m^{n}f\left(w_my\right)\big]
,z\Big\|_{q,\beta}
\\&\le\;\; \kappa|c|^{\beta}\Big\|\mathcal{T}_m^{n}f\left(u_m\sqrt[3]{ax^3+by^3}\right)+\mathcal{T}_m^{n}f\left(u_m\sqrt[3]{ax^3-by^3}\right)-c\mathcal{T}_m^{n}f\left(u_mx\right)-d\mathcal{T}_m^{n}f\left(u_my\right),z\Big\|_{q,\beta}
\\&+\kappa^2|d|^{\beta}\Big\|\mathcal{T}_m^{n}f\left(v_m\sqrt[3]{ax^3+by^3}\right)+\mathcal{T}_m^{n}f\left(v_m\sqrt[3]{ax^3-by^3}\right)-c\mathcal{T}_m^{n}f\left(v_mx\right)-d\mathcal{T}_m^{n}f\left(v_my\right),z\Big\|_{q,\beta}
\\&+\kappa^2\Big\|\mathcal{T}_m^{n}f\left(w_m\sqrt[3]{ax^3+by^3}\right)+\mathcal{T}_m^{n}f\left(w_m\sqrt[3]{ax^3-by^3}\right)-c\mathcal{T}_m^{n}f\left(w_mx\right)-d\mathcal{T}_m^{n}f\left(v_my\right),z\Big\|_{q,\beta}
\\&\le P_m^n\kappa|c|^{\beta}\big\{h_1\big(u_m^3x^3,z\big)h_2\big(u_m^3y^3,z\big)+h_3\big(u_m^3x^3,z\big)+h_4\big(u_m^3y^3,z\big)\big\}
\\&+P_m^n\kappa^2|d|^{\beta}\big\{h_1\big(v_m^3x^3,z\big)h_2\big(v_m^3y^3,z\big)+h_3\big(v_m^3x^3,z\big)+h_4\big(v_m^3y^3,z\big)\big\}
\\&+P_m^n\kappa^2\big\{h_1\big(w_m^3x^3,z\big)h_2\big(w_m^3y^3,z\big)+h_3\big(w_m^3x^3,z\big)+h_4\big(w_m^3y^3,z\big)\big\}
\\&\le P_m^n \max\{A_m,B_m,C_m\}\big[h_1\big(x^3,z\big)h_2\big(y^3,z\big)+h_3\big(x^3,z\big)+h_4\big(y^3,z\big)\big\}
\\&= P_m^{n+1}\big\{h_1\big(x^3,z\big)h_2\big(y^3,z\big)+h_3\big(x^3,z\big)+h_4\big(y^3,z\big)\big\}
\end{align*}
for all $(x,y,z)\in \mathbb{R}_0^2\times X$ with $\sqrt[3]{a}x\neq \pm\sqrt[3]{b}y$. Thus, by induction, we have shown that (\ref{eq5h}) holds for all $n\in\mathbb{N}_0$ and for all $(x,y,z)\in \mathbb{R}_0^2\times X$ with $\sqrt[3]{a}x\neq \pm\sqrt[3]{b}y$. 

 From \eqref{eq5h} and \eqref{eq} we find that
\begin{align}\label{Dd} 
\Big\|\Big|\mathcal{T}_m^{n}&f\left(\sqrt[3]{ax^3+by^3}\right)+\mathcal{T}_m^{n}f\left(\sqrt[3]{ax^3-by^3}\right)-c\mathcal{T}_m^{n}f(x)-d\mathcal{T}_m^{n}f(y),z\Big|\Big\|_{q,\beta}^{\theta}\nonumber\\&\leq
C_2^{\theta}\Big\|\mathcal{T}_m^{n}f\left(\sqrt[3]{ax^3+by^3}\right)+\mathcal{T}_m^{n}f\left(\sqrt[3]{ax^3-by^3}\right)-c\mathcal{T}_m^{n}f(x)-d\mathcal{T}_m^{n}f(y),z\Big\|_{q,\beta}^{\theta}\nonumber\\&\leq C_2^{\theta}P_m^{n\theta} \big[h_1\big(x^3,z\big)h_2\big(y^3,z\big)+h_3\big(x^3,z\big)+h_4\big(y^3,z\big)\big]^{\theta}
\end{align}
for some constant $C_2>0$. We conclude from the continuity of $\||\cdot,\cdot|\|^{\theta},$   \eqref{limit} and \eqref{Dd} that, for every $(x,y,z)\in \mathbb{R}_0^2\times X$ with $\sqrt[3]{a}x\neq \pm\sqrt[3]{b}y,$
\begin{align}\label{Dd+} 
\Big\|\Big|&Q_m\left(\sqrt[3]{ax^3+by^3}\right)+Q_m\left(\sqrt[3]{ax^3-by^3}\right)-cQ_m(x)-dQ_m(y),z\Big|\Big\|_{q,\beta}^{\theta}\\&=\lim_{n\to\infty}\Big\|\Big|\mathcal{T}_m^{n}f\left(\sqrt[3]{ax^3+by^3}\right)+\mathcal{T}_m^{n}f\left(\sqrt[3]{ax^3-by^3}\right)-c\mathcal{T}_m^{n}f(x)-d\mathcal{T}_m^{n}f(y),z\Big|\Big\|_{q,\beta}^{\theta}\nonumber\\&=0,\nonumber
\end{align}
this means
\begin{align}\label{kwr} 
Q_m\left(\sqrt[3]{ax^3+by^3}\right)+Q_m\left(\sqrt[3]{ax^3-by^3}\right)=cQ_m(x)+dQ_m(y)
\end{align}
for all $x,y\in \mathbb{R}_0$, with $\sqrt[3]{a}x\neq \pm\sqrt[3]{b}y.$

We want now to prove that the mapping $Q_m:\mathbb{R}_0\to X$  is  unique. So, let $G_m:\mathbb{R}_0\to X$ be a solution of \eqref{kwr} and
\begin{align}\label{theta0}
\big\|f(x)-G_{m}(x),z\big\|_{q,\beta}^{\theta}\leq \frac{K\sigma_m^{\theta} \big[h_1\big(x^3,z\big)h_2\big(x^3,z\big)+h_3\big(x^3,z\big)+h_4\big(x^3,z\big)\big]^{\theta}}{1-P_m^{\theta }}
\end{align}
for all $(x,z)\in \mathbb{R}_0\times X.$ Thus, replacing $(x,y)$ by $(u_mx,v_my)$  in \eqref{kwr}, we get $\mathcal{T}_mG_m(x)=G_m(x)$ for all $x\in \mathbb{R}_0$ and $m\in\mathcal{M}_0$.

Moreover, we have
\begin{align}\label{theta+0}
\||Q_m(x)-G_{m}(x),z|\|_{q,\beta}&\le C_2\|Q_m(x)-G_{m}(x),z\|_{q,\beta}\nonumber
\\&\le \kappa C_2\big(\|Q_m(x)-f(x),z\|_{q,\beta}+\|G_m(x)-f(x),z\|_{q,\beta}\big)\nonumber
\\&\le\;L\big[h_1\big(x^3,z\big)h_2\big(x^3,z\big)+h_3\big(x^3,z\big)+h_4\big(x^3,z\big)\big],
\end{align}
where $L:=\frac{2\kappa C_2 K^{1/{\theta}}\sigma_m}{(1-P_m^{\theta})^{1/{\theta}}}$ and $m\in \mathcal{M}_0.$   It is easy to prove by induction on $n$ that 
\begin{align}\label{theta+1}
\||Q_m(x)-G_{m}(x),z|\|_{q,\beta}\le \;LP_m^{n}\big[h_1\big(x^3,z\big)h_2\big(x^3,z\big)+h_3\big(x^3,z\big)+h_4\big(x^3,z\big)\big]
\end{align}
for all $x\in \mathbb{R}_0$ and $m\in\mathcal{M}_0$. Letting $n\to\infty$ in (\ref{theta+1}), we get  $Q_m\equiv G_m$. So, the fixed point satisfying \eqref{theta} of $\mathcal{T}_m$  is unique.

Letting $m\to\infty$ in (\ref{theta}), we obtain $\lim_{m\to\infty}\|f(x)-Q_m(x),z\|_{q,\beta}=0$, i.e.,
\begin{align}\label{Bmh}
\lim_{m\to\infty}Q_m(x)=f(x).
\end{align}
Also, letting $m \to\infty$ in \eqref{Dd+}, using \eqref{Bmh}  and the continuity of $\||\cdot,\cdot|\|$, we have
\begin{align*}
\Big\|\Big|&f\left(\sqrt[3]{ax^3+by^3}\right)+f\left(\sqrt[3]{ax^3-by^3}\right)-cf(x)-df(y),z\Big|\Big\|_{q,\beta}\nonumber\\&=\lim_{m\to\infty}\Big\|\Big|Q_m\left(\sqrt[3]{ax^3+by^3}\right)+Q_m\left(\sqrt[3]{ax^3-by^3}\right)-cQ_m(x)-dQ_m(y),z\Big|\Big\|_{q,\beta}=0
\end{align*}
for all $(x,y,z)\in \mathbb{R}_0^2\times X$, with $\sqrt[3]{a}x\neq \pm\sqrt[3]{b}y$, that is,
$$
f\left(\sqrt[3]{ax^3+by^3}\right)+f\left(\sqrt[3]{ax^3-by^3}\right)=cf(x)+df(y)
$$
for all $x,y\in \mathbb{R}_0$, with $\sqrt[3]{a}x\neq \pm\sqrt[3]{b}y.$ This proves that $f$ is a solution of the  radical-type functional  equation \eqref{eq000}. The proof of the theorem is complete.
\end{proof}

\begin{remark} Theorem \ref{plh2}   also provide hyperstability outcomes in each of the following cases:
\begin{enumerate}
	\item[(i)] $\gamma(x,y,z)=h_1\big(x^3,z\big)h_2\big(y^3,z\big)+h_3\big(x^3,z\big)$
		\item[(ii)] $\gamma(x,y,z)=h_1\big(x^3,z\big)h_2\big(y^3,z\big)+h_4\big(y^3,z\big)$
		\item[(iii)] $\gamma(x,y,z)=h_1\big(x^3,z\big)h_2\big(y^3,z\big)$
		\item[(iv)] $\gamma(x,y,z)=h_3\big(x^3,z\big)+h_4\big(y^3,z\big)$
		\item[(v)] $\gamma(x,y,z)=h_3\big(x^3,z\big)+h_3\big(y^3,z\big)$
		\item[(vi)] $\gamma(x,y,z)=h_1\big(x^3,z\big)$
		\item[(vii)] $\gamma(x,y,z)=h_2\big(y^3,z\big)$
\end{enumerate}
for all $(x,y,z)\in \mathbb{R}_0^2\times X.$
\end{remark}
Applying Theorem \ref{plh2} and the same technique, we get  the following corollary.

\begin{corol}\label{c2}   Under the hypotheses of Theorem \ref{plh2}, we consider two functions  $f:\mathbb{R}\to X$ and $F:\mathbb{R}^2\to X$  such that
\begin{align*}
\Big\|f\left(\sqrt[3]{ax^3+by^3}\right)+f\left(\sqrt[3]{ax^3-by^3}\right)-cf(x)-df(y)-F(x,y),z\Big\|_{q,\beta}\le \gamma(x,y,z)
\end{align*}
for all $(x,y,z)\in \mathbb{R}_0^2\times X$, with $\sqrt[3]{a}x\neq \pm\sqrt[3]{b}y$ and $\gamma(x,y,z)=h_1\big(x^3,z\big)h_2\big(y^3,z\big)+h_3\big(x^3,z\big)+h_4\big(y^3,z\big)$. Assume that the functional equation
\begin{align}\label{eq14}
\tilde{f}\left(\sqrt[3]{ax^3+by^3}\right)+\tilde{f}\left(\sqrt[3]{ax^3-by^3}\right)&=c\tilde{f}(x)+d\tilde{f}(y)+F(x,y),\\&\quad\;\;x,y\in \mathbb{R}_0,\;\;\sqrt[3]{a}x\neq \pm\sqrt[3]{b}y\nonumber
\end{align} 
admits a solution $f_0:\mathbb{R}\to X$. Then $f$ is a solution of \eqref{eq14}.
\end{corol}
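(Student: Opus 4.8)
The plan is to reduce the statement to Theorem \ref{plh2} by passing to the difference $g:=f-f_0$. First I would record that, since $f_0$ is a solution of \eqref{eq14},
$$f_0\left(\sqrt[3]{ax^3+by^3}\right)+f_0\left(\sqrt[3]{ax^3-by^3}\right)-cf_0(x)-df_0(y)-F(x,y)=0$$
for all $x,y\in\mathbb{R}_0$ with $\sqrt[3]{a}x\neq\pm\sqrt[3]{b}y$. Subtracting this identity from the vector whose $\|\cdot,z\|_{q,\beta}$-norm is estimated in the hypothesis on $f$, the terms $F(x,y)$ cancel, and what remains is exactly
$$g\left(\sqrt[3]{ax^3+by^3}\right)+g\left(\sqrt[3]{ax^3-by^3}\right)-cg(x)-dg(y).$$
Hence $g$ satisfies \eqref{ezd} with the \emph{same} control function $\gamma(x,y,z)=h_1\big(x^3,z\big)h_2\big(y^3,z\big)+h_3\big(x^3,z\big)+h_4\big(y^3,z\big)$.

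Next I would apply Theorem \ref{plh2} to $g$. The point to stress is that all of the structural hypotheses of that theorem — the set $\mathcal{M}_0$ and the quantities $A_n,B_n,C_n$, the numbers $u_n,v_n,w_n$, the constants $s_i(\rho)$, and one of the limiting conditions (i)–(iii) — are formulated solely in terms of $a,b,c,d$, $\kappa$, $\beta$ and the functions $h_i$, and do not involve $f$; they therefore remain in force for $g$ verbatim. Theorem \ref{plh2} then gives that $g$ is a solution of \eqref{eq000}, i.e.
$$g\left(\sqrt[3]{ax^3+by^3}\right)+g\left(\sqrt[3]{ax^3-by^3}\right)=cg(x)+dg(y)$$
for all $x,y\in\mathbb{R}_0$ with $\sqrt[3]{a}x\neq\pm\sqrt[3]{b}y$.

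Finally I would write $f=g+f_0$ and add the last display to the identity satisfied by $f_0$; since the radical substitutions $x\mapsto\sqrt[3]{ax^3\pm by^3}$ and multiplication by the scalars $c,d$ are compatible with addition of functions, this yields
$$f\left(\sqrt[3]{ax^3+by^3}\right)+f\left(\sqrt[3]{ax^3-by^3}\right)=cf(x)+df(y)+F(x,y)$$
on the same domain, i.e. $f$ solves \eqref{eq14}. There is essentially no hard step: the only things to check with care are the cancellation of $F$ in the first paragraph and the observation that none of the hypotheses of Theorem \ref{plh2} depend on the unknown function, so they transfer to $g$ without change — this is precisely the "same technique" alluded to in the statement, namely the standard reduction of an inhomogeneous stability problem to the homogeneous one.
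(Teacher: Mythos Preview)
Your proposal is correct and follows the same reduction as the paper: set $g=f-f_0$, verify that $g$ satisfies \eqref{ezd}, apply Theorem \ref{plh2}, and rebuild $f$. Your version is in fact slightly cleaner than the paper's: because $f_0$ satisfies \eqref{eq14} exactly, the vector in the $f$-inequality is \emph{equal} to the corresponding vector for $g$, so you get the bound $\gamma(x,y,z)$ directly, whereas the paper applies the quasi-triangle inequality (B4) and picks up an extra harmless factor $\kappa$ before invoking Theorem \ref{plh2}.
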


\begin{proof} Let $g(x):=f(x)-f_0(x)$ for $x\in \mathbb{R}_0$. Then 
\begin{align*}
\Big\|g\Big(\sqrt[3]{ax^3+by^3}\Big)&+g\Big(\sqrt[3]{ax^3-by^3}\Big)-cg(x)-dg(y),z\Big\|_{q,\beta} 
\\\le& \; \kappa\Big\|f\Big(\sqrt[3]{ax^3+by^3}\Big)+f\Big(\sqrt[3]{ax^3-by^3}\Big)-cf(x)-df(y)-F(x,y),z\Big\|_{q,\beta}
\\&\;\;+\kappa\Big\|f_0\Big(\sqrt[3]{ax^3+by^3}\Big)+f_0\Big(\sqrt[3]{ax^3+by^3}\Big)-cf_0(x)-df_0(y)-F(x,y),z\Big\|_{q,\beta}
\\=& \;\kappa\Big\|f\Big(\sqrt[3]{ax^3+by^3}\Big)+f\Big(\sqrt[3]{ax^3-by^3}\Big)-cf(x)-df(y)-F(x,y),z\Big\|_{q,\beta}
\\ \leq&\;  \kappa\gamma(x,y,z),\;\; (x,y,z)\in \mathbb{R}_0^2\times X,\;\;\; \sqrt[3]{a}x\neq \pm\sqrt[3]{b}y.
\end{align*}
It follows from Theorem \ref{plh2}    that $g$ satisfies the  equation \eqref{eq000}.  Therefore,
\begin{align*}
f\left(\sqrt[3]{ax^3+by^3}\right)&+f\left(\sqrt[3]{ax^3-by^3}\right)-cf(x)-df(y)-F(x,y)
\\&=g\Big(\sqrt[3]{ax^3+by^3}\Big)+g\Big(\sqrt[3]{ax^3-by^3}\Big)-cg(x)-dg(y)
\\&+f_0\Big(\sqrt[3]{ax^3+by^3}\Big)+f_0\Big(\sqrt[3]{ax^3-by^3}\Big)-cf_0(x)-df_0(y)-F(x,y)
=0
\end{align*}
for all $x, y\in \mathbb{R}$ with $\sqrt[5]{a}x\neq -\sqrt[5]{b}y$.
\end{proof}

To end this section, we derive some particular cases from Theorem \ref{plh2} and Corollary \ref{c2}.\\
Let $(X,\|\cdot,\cdot\|_{q,\beta},\kappa)$ be a quasi-$(2,\beta)$-Banach space,  $(Y,\|\cdot,\cdot\|_{q,\alpha},\kappa')$  a quasi-$(2,\alpha)$-normed space. Also, let $\lambda_i:\mathbb{R}\to Y\setminus\{0\}$ be additive functions continuous at a point for $i=1,2,3,4,$ and  $g:X\to Y\setminus\{0\}$ a surjective mapping. According to Theorem \ref{plh2} and Corollary \ref{c2},  we give some consequences with  $$h_i(x,z) :=c_i\|\lambda_i(x),g(z)\|_{q,\alpha}^{p_i},\qquad (x,z)\in\mathbb{R}_0\times X,$$ 
 where  $c_i,p_i\in\mathbb{R}$ for $i\in\{1,2,3,4\}.$

\begin{corol}  Let $a,b,c,d\in\mathbb{R}_0$ be fixed numbers, $D_1,D_2,D_3\geq 0$ be three constants and let $p_i\in \mathbb{R}$ such that $p_1+p_2, p_3,p_4<0$ for $i\in\{1,2,3,4\}.$ If  $f : \mathbb{R} \to X$ satisfies the inequality
\begin{align*}
\Big\|f\Big(&\sqrt[3]{ax^3+by^3}\Big)+f\left(\sqrt[3]{ax^3-by^3}\right)-cf(x)-df(y),z\Big\|_{q,\beta} \\&\leq D_1\left\|\lambda_1(x^3),g(z)\right\|_{q,\alpha}^{p_1}\left\|\lambda_2(y^3),g(z)\right\|_{q,\alpha}^{p_2}+D_2\left\|\lambda_3(x^3),g(z)\right\|_{q,\alpha}^{p_3}+D_3\left\|\lambda_4(y^3),g(z)\right\|_{q,\alpha}^{p_4}\nonumber
\end{align*}
 for all $(x,y,z)\in \mathbb{R}_0^2\times X$ with $\sqrt[3]{a}x\neq \pm\sqrt[3]{b}y$, then \eqref{eq000} holds.
\end{corol}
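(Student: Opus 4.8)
The plan is to obtain this corollary as a direct specialization of Theorem~\ref{plh2}: I would choose the auxiliary functions $h_i$ so that the bound in the corollary becomes the bound $\gamma(x,y,z)=h_1(x^3,z)h_2(y^3,z)+h_3(x^3,z)+h_4(y^3,z)$ of Theorem~\ref{plh2}, compute the associated quantities $s_i(\rho)$, and then check the three remaining hypotheses (that $\mathcal{M}_0\neq\emptyset$ with the displayed limit, and that one of (i)--(iii) holds).

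Concretely, put $h_i(x,z):=c_i\|\lambda_i(x),g(z)\|_{q,\alpha}^{p_i}$ on $\mathbb{R}_0\times X$ with $c_1c_2=D_1$, $c_3=D_2$, $c_4=D_3$ (for instance $c_1=D_1$, $c_2=1$); then the hypothesis on $f$ is precisely \eqref{ezd} with this $\gamma$. Each $\lambda_i$ is additive and continuous at a point, hence $\mathbb{R}$-linear, so $\lambda_i(\rho t)=\rho\lambda_i(t)$ for all $\rho,t\in\mathbb{R}$; together with property (B3) of the quasi-$(2,\alpha)$-norm on $Y$ this gives, for every $\rho\in\mathbb{R}_0$ and $(x,z)\in\mathbb{R}\times X$,
\[
h_i(\rho x^3,z)=c_i\|\rho\,\lambda_i(x^3),g(z)\|_{q,\alpha}^{p_i}=|\rho|^{\alpha p_i}\,h_i(x^3,z),
\]
whence $s_i(\rho)=|\rho|^{\alpha p_i}$ for $i\in\{1,2,3,4\}$.

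It then remains pure computation with the exponents. With $u_n=n/\sqrt[3]{a}$, $v_n=\sqrt[3]{(1-n^3)/b}$, $w_n=\sqrt[3]{2n^3-1}$ we have $|u_n^3|,|v_n^3|,|w_n^3|\to\infty$. Since $p_1+p_2<0$, the quantities $s_1(t)s_2(t)=|t|^{\alpha(p_1+p_2)}$ tend to $0$ as $|t|\to\infty$, so $A_n\to0$; likewise $p_3<0$, $p_4<0$ give $s_3(t)=|t|^{\alpha p_3}\to0$ and $s_4(t)=|t|^{\alpha p_4}\to0$, hence $B_n\to0$ and $C_n\to0$. Thus $P_n=\max\{A_n,B_n,C_n\}\to0$, so $P_n<1$ for all large $n$ and $\mathcal{M}_0\neq\emptyset$. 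The displayed limit holds as well, because $s_1(u_n^3)s_2(v_n^3)=|n^3/a|^{\alpha p_1}\,|(1-n^3)/b|^{\alpha p_2}$ is asymptotically a constant times $n^{3\alpha(p_1+p_2)}\to0$, while $s_3(u_n^3)\to0$ and $s_4(v_n^3)\to0$ for the same reason. Finally, all three conditions (i)--(iii) of Theorem~\ref{plh2} hold simultaneously, since $s_1(\rho)s_2(\rho)=|\rho|^{\alpha(p_1+p_2)}$, $s_3(\rho)=|\rho|^{\alpha p_3}$ and $s_4(\rho)=|\rho|^{\alpha p_4}$ all tend to $0$ as $|\rho|\to\infty$; so whichever of $A_n,B_n,C_n$ realizes $P_n$, the corresponding hypothesis is met. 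Theorem~\ref{plh2} now applies and gives that $f$ satisfies \eqref{eq000}.

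The only step requiring care is the identification $s_i(\rho)=|\rho|^{\alpha p_i}$: one must make sure the infimum defining $s_i$ is not pushed to $+\infty$ by pairs with $\|\lambda_i(x^3),g(z)\|_{q,\alpha}=0$ (that is, $\lambda_i(x^3)$ and $g(z)$ linearly dependent). Here the homogeneity relation above is a genuine identity valid for every admissible $(x,z)$, and using that $g$ is onto $Y\setminus\{0\}$ and the $\lambda_i$ are nonzero additive maps one checks the infimum equals exactly $|\rho|^{\alpha p_i}$. Everything else is the routine exponent bookkeeping already indicated.
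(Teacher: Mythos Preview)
Your proposal is correct and follows essentially the same route as the paper: define $h_i(x,z)=c_i\|\lambda_i(x),g(z)\|_{q,\alpha}^{p_i}$ with $(c_1c_2,c_3,c_4)=(D_1,D_2,D_3)$, use the $\mathbb{R}$-linearity of the $\lambda_i$ together with (B3) to get $s_i(\rho)=|\rho|^{\alpha p_i}$, and then check the limit hypotheses and $\mathcal{M}_0\neq\emptyset$ by the exponent sign conditions before invoking Theorem~\ref{plh2}. Your extra remark about the infimum defining $s_i(\rho)$ when the quasi-$(2,\alpha)$-norm vanishes is a point the paper does not pause on, but otherwise the argument is the same.
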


\begin{proof} Define $h_i : \mathbb{R}_0\times X \to \mathbb{R}_+$ by $h_i(x^3,z)=c_i\left\|\lambda_i(x^3),g(z)\right\|_{q,\alpha}^{p_i}$  for some $c_i\in\mathbb{R}_+$ and  $p_3,p_4,p_1+p_2<0$ with $i\in\{1,2,3,4\}$ and $(D_1,D_2,D_3)= (c_1c_2, c_3, c_4)$.

For each $\rho\in \mathbb{R}_0,$ we have
\begin{align*}
s_i(\rho)&=\inf\Big\{t\in \mathbb{R}_+:h_i\big(\rho x^3,z\big)\le th_i\big(x^3,z\big),\;\;\forall (x,z)\in \mathbb{R}\times X\Big\} 
\\&=\inf\Big\{t\in \mathbb{R}_+:c_i\left\|\lambda_i(\rho x^3),g(z)\right\|_{q,\alpha}^{p_i}\le tc_i\left\|\lambda_i(x^3)g(z)\right\|_{q,\alpha}^{p_i},\;\; \forall (x,z)\in \mathbb{R}\times X\Big\}
\\&=|\rho|^{\alpha p_i}
\end{align*}
 for $i\in\{1,2,3,4\}.$ So,
\begin{align*}
\lim_{n\to\infty}&\max\{s_1(u_n^3)s_2(v_n^3),s_3(u_n^3),s_4(v_n^3)\}\\&=
\lim_{n\to\infty}\max\left\{s_1\Big(\frac{n^3}{a}\Big)s_2\Big(\frac{1-n^3}{b}\Big),s_3\Big(\frac{n^3}{a}\Big),s_4\Big(\frac{1-n^3}{b}\Big)\right\}\\&=
\lim_{n\to\infty}\max\left\{\Big|\frac{n^3}{a}\Big|^{\alpha p_1}\Big|\frac{1-n^3}{b}\Big|^{\alpha p_2},\Big|\frac{n^3}{a}\Big|^{\alpha p_3},\Big|\frac{1-n^3}{b}\Big|^{\alpha p_4}\right\}=0
\end{align*}
where $u_n=\frac{n}{\sqrt[3]{a}},\;\; v_n=\sqrt[3]{\frac{1-n^3}{b}}$ and $n\in\mathbb{N}_2.$ Similarly, we have
$$\lim_{n\to\infty}P_n=\lim_{n\to\infty}\max\{A_n,B_n,C_n\}=0$$
where $A_n,\; B_n$ and $C_n$ are defined as in Theorem \ref{plh2}. Clearly, there is $n_0\in \mathbb{N}_2$ such that
$$P_n <1,\quad n\geq n_0.$$
 Thus, all the conditions in Theorem \ref{plh2} are fulfilled. 
\end{proof}

\begin{corol} 
Let $a,b,c,d\in\mathbb{R}_0$ be fixed numbers, $D_1,D_2,D_3\geq 0$ be three constants and let $p_i\in \mathbb{R}$ such that $p_1+p_2,p_3,p_4<0$ for $i\in\{1,2,3,4\}.$ Let  $f : \mathbb{R} \to X$ and $F : \mathbb{R}^2 \to X$ be two functions such that
\begin{align*}
\Big\|f\Big(&\sqrt[3]{ax^3+by^3}\Big)+f\left(\sqrt[3]{ax^3-by^3}\right)-cf(x)-df(y)-F(x,y),z\Big\|_{q,\beta} \\&\leq D_1\left\|\lambda_1(x^3),g(z)\right\|_{q,\alpha}^{p_1}\left\|\lambda_2(y^3),g(z)\right\|_{q,\alpha}^{p_2}+D_2\left\|\lambda_3(x^3),g(z)\right\|_{q,\alpha}^{p_3}+D_3\left\|\lambda_4(y^3),g(z)\right\|_{q,\alpha}^{p_4}\nonumber
\end{align*}
 for all $(x,y,z)\in \mathbb{R}_0^2\times X$ with $\sqrt[3]{a}x\neq \pm\sqrt[3]{b}y$. Assume that the functional equation
\begin{align}\label{eq14h}
\tilde{f}\left(\sqrt[3]{ax^3+by^3}\right)+\tilde{f}\left(\sqrt[3]{ax^3-by^3}\right)&=c\tilde{f}(x)+d\tilde{f}(y)+F(x,y),\\&\quad\;\;x,y\in \mathbb{R}_0,\;\;\sqrt[3]{a}x\neq \pm\sqrt[3]{b}y\nonumber
\end{align} 
admits a solution $f_0:\mathbb{R}\to X$. Then $f$ is a solution of \eqref{eq14h}.
\end{corol}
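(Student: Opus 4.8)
The plan is to reduce this statement to Corollary \ref{c2} by exhibiting a control function $\gamma$ of the exact form required there, namely $\gamma(x,y,z)=h_1(x^3,z)h_2(y^3,z)+h_3(x^3,z)+h_4(y^3,z)$. Concretely, I would set
$$h_i(x^3,z):=c_i\|\lambda_i(x^3),g(z)\|_{q,\alpha}^{p_i},\qquad (x,z)\in\mathbb{R}_0\times X,\; i\in\{1,2,3,4\},$$
with $c_i\in\mathbb{R}_+$ chosen so that $(D_1,D_2,D_3)=(c_1c_2,c_3,c_4)$; then the right-hand side of the assumed inequality is precisely $\gamma(x,y,z)$ with this choice. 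The first task is therefore to check that these $h_i$ satisfy all the hypotheses of Theorem \ref{plh2} (which are the standing hypotheses of Corollary \ref{c2}).

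The key computation is the evaluation of the numbers $s_i(\rho)$. Since each $\lambda_i:\mathbb{R}\to Y\setminus\{0\}$ is additive and continuous at a point, it is $\mathbb{R}$-linear, so $\lambda_i(\rho t)=\rho\,\lambda_i(t)$ for all $\rho,t\in\mathbb{R}$; combined with axiom (B3) of the quasi-$(2,\alpha)$-norm this gives $\|\lambda_i(\rho x^3),g(z)\|_{q,\alpha}^{p_i}=|\rho|^{\alpha p_i}\|\lambda_i(x^3),g(z)\|_{q,\alpha}^{p_i}$, hence $s_i(\rho)=|\rho|^{\alpha p_i}$ for each $i$ and each $\rho\in\mathbb{R}_0$. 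With $u_n=n/\sqrt[3]{a}$, $v_n=\sqrt[3]{(1-n^3)/b}$ and $p_1+p_2,p_3,p_4<0$, I would then verify
$$\lim_{n\to\infty}\max\bigl\{s_1(u_n^3)s_2(v_n^3),\,s_3(u_n^3),\,s_4(v_n^3)\bigr\}=\lim_{n\to\infty}\max\Bigl\{\bigl|\tfrac{n^3}{a}\bigr|^{\alpha p_1}\bigl|\tfrac{1-n^3}{b}\bigr|^{\alpha p_2},\bigl|\tfrac{n^3}{a}\bigr|^{\alpha p_3},\bigl|\tfrac{1-n^3}{b}\bigr|^{\alpha p_4}\Bigr\}=0,$$
and likewise $\lim_{n\to\infty}P_n=\lim_{n\to\infty}\max\{A_n,B_n,C_n\}=0$, so that $P_n<1$ for all sufficiently large $n$ and $\mathcal{M}_0\neq\emptyset$. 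The conditions $\lim_{|\rho|\to+\infty}s_1(\rho)s_2(\rho)=0$, $\lim_{|\rho|\to+\infty}s_3(\rho)=0$, $\lim_{|\rho|\to+\infty}s_4(\rho)=0$ also follow immediately from $s_i(\rho)=|\rho|^{\alpha p_i}$ with negative exponents, so whichever of the three alternatives (i)--(iii) is realised by $P_n$, the corresponding hypothesis is in force.

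Once all hypotheses are checked, the conclusion is automatic: Corollary \ref{c2} applied to $f$, $F$ and this $\gamma$ — under the standing assumption that \eqref{eq14h} admits a solution $f_0:\mathbb{R}\to X$ — yields that $f$ itself is a solution of \eqref{eq14h}. I do not expect any genuine obstacle here; the only point requiring a little care is the passage from ``additive and continuous at a point'' to full $\mathbb{R}$-homogeneity of $\lambda_i$, which is what makes the scaling identity $s_i(\rho)=|\rho|^{\alpha p_i}$ valid, and the bookkeeping of constants so that the hypothesis's right-hand side matches the template $h_1h_2+h_3+h_4$ exactly. Everything else is the routine limit estimate already carried out in the preceding corollary.
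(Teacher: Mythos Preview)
Your proposal is correct and follows exactly the approach of the paper: one defines $h_i(x^3,z)=c_i\|\lambda_i(x^3),g(z)\|_{q,\alpha}^{p_i}$, computes $s_i(\rho)=|\rho|^{\alpha p_i}$ using the $\mathbb{R}$-linearity of the $\lambda_i$ (additive plus continuous at a point), verifies the limit conditions and $\mathcal{M}_0\neq\emptyset$ precisely as in the proof of the preceding corollary, and then invokes Corollary~\ref{c2}. The paper in fact leaves this final corollary without its own written proof, relying on the verification already carried out for the previous one together with Corollary~\ref{c2}, which is exactly your plan.
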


\begin{note}
In the same way, we can find the general solution and the hyperstability results of the following functional equation:
$$f\left(\sqrt[n]{ax^{n}+by^{n}}\right)+f\left(\sqrt[n]{ax^{n}-by^{n}}\right)=cf(x)+df(y),\;\;x,y \in \mathbb{R},$$
where $a, b, c$ and $d$ are nonzero real constants and $n$ is an odd positive integer.
\end{note}
\begin{question}

Considered as a future work concerning the solution and Ulam's stability for the following functional equation:
$$f\left(\sqrt[n]{|ax^{n}+by^{n}|}\right)+f\left(\sqrt[n]{|ax^{n}-by^{n}|}\right)=cf(x)+df(y),\;\;x,y \in \mathbb{R},$$
where $a, b, c$ and $d$ are nonzero real constants and $n$ is a positive integer.

\end{question}

\end{document}